\newtheorem{theorem}{Theorem}
\newtheorem{corollary}[theorem]{Corollary}
\newtheorem{definition}[theorem]{Definition}
\newtheorem{lemma}[theorem]{Lemma}
\newtheorem{proposition}[theorem]{Proposition}
\newenvironment{proof}[1][Proof]{\textbf{#1.} }{\ \rule{0.5em}{0.5em}}
\begin{document}

\title{Weighted interlace polynomials}
\author{Lorenzo Traldi\\Lafayette College\\Easton, Pennsylvania 18042}
\date{}
\maketitle

\begin{abstract}
The interlace polynomials introduced by Arratia, Bollob\'{a}s and Sorkin
extend to invariants of graphs with vertex weights, and these weighted
interlace polynomials have several novel properties. One novel property is a
version of the fundamental three-term formula
\[
q(G)=q(G-a)+q(G^{ab}-b)+((x-1)^{2}-1)q(G^{ab}-a-b)
\]
that lacks the last term. It follows that interlace polynomial computations
can be represented by binary trees rather than mixed binary-ternary trees.
Binary computation trees provide a description of $q(G)$ that is analogous to
the activities description of the Tutte polynomial. If $G$ is a tree or forest
then these ``algorithmic activities''\ are associated with a certain kind of
independent set in $G$. Three other novel properties are weighted pendant-twin
reductions, which involve removing certain kinds of vertices from a graph and
adjusting the weights of the remaining vertices in such a way that the
interlace polynomials are unchanged. These reductions allow for smaller
computation trees as they eliminate some branches. If a graph can be
completely analyzed using pendant-twin reductions then its interlace
polynomial can be calculated in polynomial time. An intuitively pleasing
property is that graphs which can be constructed through graph substitutions
have vertex-weighted interlace polynomials which can be obtained through
algebraic substitutions.

\bigskip

Keywords. interlace polynomial, vertex weight, pendant vertex, twin vertex,
series, parallel, graph composition, graph substitution, join, computational
complexity, tree, Tutte polynomial, Jones polynomial

\bigskip

Mathematics Subject\ Classification. 05C50

\end{abstract}

\section{Introduction}

Motivated by problems arising from DNA sequencing and the properties of circle
graphs of 2-in, 2-out digraphs, Arratia, Bollob\'{a}s and Sorkin introduced a
new family of graph invariants, the interlace polynomials, in \cite{A1, A2,
A}. These invariants may be defined either through recursive elimination of
vertices or as sums indexed by subsets of vertices \cite{AH, A}, much as the
Tutte polynomial may be defined either through recursive elimination of edges
or as a sum indexed by subsets of edges. Ellis-Monaghan and Sarmiento
\cite{EMS} have shown (among other results) that one of the one-variable
interlace polynomials, $q_{N}$, can be computed in polynomial time for
bipartite distance hereditary graphs. Their proof depends on the corresponding
result for the Tutte polynomials of series-parallel graphs, first proved in
\cite{OW}, and the fact that bipartite distance hereditary graphs are circle
graphs of Euler circuits in medial graphs of series-parallel graphs.

\bigskip

In this paper we discuss several useful features of interlace polynomials that
have been modified to incorporate vertex weights. After defining the weighted
interlace polynomials in Section 2, we observe that a simple adjustment of
weights makes it unnecessary to have the third term in the fundamental
recursion $q(G)=q(G-a)+q(G^{ab}-b)+((x-1)^{2}-1)q(G^{ab}-a-b)$ of \cite{A}. In
Section 3 we present reduction formulas that can be used to eliminate a vertex
that is a twin of another, or pendant on another. These pendant-twin
reductions are analogous to the series-parallel reductions of electrical
circuit theory, in which two resistors wired in series (resp. parallel) are
replaced by one resistor with $R=R_{1}+R_{2}$ (resp. $R^{-1}=R_{1}^{-1}%
+R_{2}^{-1}$). The pendant-twin reductions are used to extend the result of
Ellis-Monaghan and Sarmiento mentioned above to the two-variable interlace
polynomials of looped, non-bipartite distance hereditary graphs. In Section 4
we show that if $G=H\ast K$ is a looped graph obtained using the composition
construction of Cunningham \cite{Cu} then $q(G)$ is equal to the interlace
polynomial of a suitably\ re-weighted version of $K$; this generalizes results
of \cite{A2} that describe the $q_{N}$ polynomials of simple graphs
constructed through substitution. Composition has proven useful in the study
of circle graphs (see for instance \cite{Bd, Be, Cj}), so it is not surprising
to see it appear in the theory of the interlace polynomials. In\ Section 5 we
discuss some elementary properties of the unweighted $q_{N}$ polynomial,
focusing on simple (unlooped) graphs. In Section 6 we sketch a combinatorial
description of the interlace polynomials of trees and forests introduced by
Anderson, Cutler, Radcliffe and the present author in \cite{ACRT}. This
combinatorial description bears a striking resemblance to the activities
description of the Tutte polynomial, and in\ Section 7 we extend it to
arbitrary graphs using \textquotedblleft activities\textquotedblright\ defined
with respect to recursive interlace polynomial calculations. We do not know
whether or not these activities have a convenient combinatorial description in general.

\bigskip

We should observe that the idea of using vertex weights for interlace
polynomials has appeared before, though our implementation of the idea is
different from those we have seen elsewhere. In \cite{Ci}, Courcelle
introduced a multivariate interlace polynomial that is more complicated than
the polynomials we consider here, and involves assigning indeterminates to the
vertices of a graph. He used monadic second-order logic to show that it is
possible to compute bounded portions of this polynomial (and the entire
unweighted interlace polynomial $q$) in polynomial time for graphs of bounded
clique-width. This technique is quite general but involves large built-in
constants, so for pendant-twin reductions and compositions the formulas
presented here are considerably more practical. Also, Bl\"{a}ser and Hoffmann
\cite{BH} use the idea of assigning indeterminates to vertices, along with the
adjunction of two types of pendant-twin vertices, to show that evaluating
interlace polynomials is generally $\#P$-hard for almost all values of the variables.

\bigskip

In \cite{A1} Arratia, Bollob\'{a}s and Sorkin observe that there is a natural
(so natural it is ``practically a tautology'') correspondence between the
Kauffman bracket of an alternating link diagram and an interlace polynomial of
an associated 2-in, 2-out digraph. The situation is clear enough that we do
not discuss it in detail, but it is worth mentioning that this correspondence
may be extended to arbitrary link diagrams using vertex weights. The
well-known relationship between the Jones and Tutte polynomials is similar, in
that an edge-weighted or -signed version of the Tutte polynomial conveniently
incorporates crossing information when dealing with non-alternating links
\cite{K1, M, T1}.

\section{Expansions and recursions}

We recall terminology and notation of \cite{A}. \textit{Graphs} may have loops
but not multiple edges or multiple loops. The \textit{rank} $r(G)$ and
\textit{nullity} $n(G)$ of a graph $G$ are those of its adjacency matrix
considered over $GF(2)$. If $S\subseteq V(G)$ then $G[S]$ is the subgraph of
$G$ induced by $S$. In addition, we say a graph is (\textit{vertex-})
\textit{weighted} by functions $\alpha$ and $\beta$ mapping $V(G)$ into some
commutative ring with unity $R$. For ease of notation we prefer to denote a
weighted graph $G$ rather than using the triple $(G,\alpha,\beta)$; even when
two graphs differ only in their weights we will denote them $G$ and
$G^{\prime}$ rather than $(G,\alpha,\beta)$ and $(G,\alpha^{\prime}%
,\beta^{\prime})$. Also, if a graph is modified then unless otherwise stated,
we presume that the weight functions $\alpha$ and $\beta$ are modified in the
most natural way. For instance, if $a\in V(G)$ then the other vertices of $G$
have the same vertex weights in $G-a$ as they have in $G$. If $G$ is a
weighted graph then the \textit{unweighted version} of $G$ is denoted $G^{u}$;
it has the same underlying graph and the trivial weights $\alpha\equiv
\beta\equiv1\in\mathbb{Z}$.

\begin{definition}
\label{2varq}If $G$ is a vertex-weighted graph then the \emph{weighted
interlace polynomial} of $G$ is
\[
q(G)=\sum_{S\subseteq V(G)}(%
%TCIMACRO{\dprod \limits_{s\in S}}%
%BeginExpansion
{\displaystyle\prod\limits_{s\in S}}
%EndExpansion
\alpha(s))(%
%TCIMACRO{\dprod \limits_{v\not \in S}}%
%BeginExpansion
{\displaystyle\prod\limits_{v\not \in S}}
%EndExpansion
\beta(v))(x-1)^{r(G[S])}(y-1)^{n(G[S])}.
\]

\end{definition}

\begin{definition}
If $G$ is a vertex-weighted graph then the \emph{weighted vertex-nullity
interlace polynomial} of $G$ is
\[
q_{N}(G)=\sum_{S\subseteq V(G)}(%
%TCIMACRO{\dprod \limits_{s\in S}}%
%BeginExpansion
{\displaystyle\prod\limits_{s\in S}}
%EndExpansion
\alpha(s))(%
%TCIMACRO{\dprod \limits_{v\not \in S}}%
%BeginExpansion
{\displaystyle\prod\limits_{v\not \in S}}
%EndExpansion
\beta(v))(y-1)^{n(G[S])}.
\]

\end{definition}

\begin{definition}
If $G$ is a vertex-weighted graph then the \emph{weighted vertex-rank
interlace polynomial} of $G$ is
\[
q_{R}(G)=\sum_{S\subseteq V(G)}(%
%TCIMACRO{\dprod \limits_{s\in S}}%
%BeginExpansion
{\displaystyle\prod\limits_{s\in S}}
%EndExpansion
\alpha(s))(%
%TCIMACRO{\dprod \limits_{v\not \in S}}%
%BeginExpansion
{\displaystyle\prod\limits_{v\not \in S}}
%EndExpansion
\beta(v))(x-1)^{r(G[S])}.
\]

\end{definition}

The original, unweighted interlace polynomials of $G$ are recovered by using
$G^{u}.$

\bigskip

The three weighted interlace polynomials of a particular weighted graph $G$
may be substantially different from each other. Considered as functions
defined on the class of all weighted graphs, however, the three polynomials
are essentially equivalent: if $\tilde{G}$ is obtained from $G$ by
re-weighting $V(G)=\{v_{1},...,v_{n}\}$ using the indeterminates in the
polynomial ring $\mathbb{Z}[\alpha_{1},..,\alpha_{n},\beta_{1},...,\beta_{n}%
]$, then the value of any one weighted interlace polynomial on $\tilde{G} $
determines the values of all three polynomials on all weighted versions of
$G$. For instance, $q(G)$ can be obtained from $q_{N}(\tilde{G})$ by
substituting $(x-1)\cdot\alpha(v_{i})$ for $\alpha_{i}$, $\beta(v_{i})$ for
$\beta_{i}$ and $1+\frac{y-1}{x-1}$ for $y$. (In contrast, the functions
defined by the three unweighted interlace polynomials are rather different
from one another, as there are many pairs of graphs distinguished by $q_{R}$
but not by $q_{N}$ \cite{A}.) We mention all three weighted polynomials simply
because one of them may be more convenient for some purposes than the others.
Most of our results are stated for $q$ because it specializes to the others
most readily.

\bigskip

In fact any one of $q(\tilde{G})$, $q_{N}(\tilde{G})$, $q_{R}(\tilde{G})$
determines $G$ up to isomorphism, because the vertex weights identify the
contribution of each $S\subseteq V(G)$. The looped vertices of $G$ appear in
the 1-element subsets $S$ of rank 1 (nullity 0), some pairs of adjacent
vertices of $G$ appear in the 2-element subsets $S$ that contain at least one
unlooped vertex and have rank 2 (nullity 0), and the other pairs of adjacent
vertices of $G$ appear in the 2-element subsets $S$ that contain two looped
vertices and have rank 1 (nullity 1). Moreover the same comment applies even
if only one of $\alpha,\beta$ is nontrivial, i.e., if $\beta\equiv1$ or
$\alpha\equiv1$. This might make it seem possible to simplify our discussion
by considering only $\alpha$ or only $\beta$, but we prefer to use both
weights because they produce especially easy-to-read formulas, as noted in the
discussions of Theorems \ref{theorem1} and \ref{theorem3} below. Setting one
weight identically equal to 1 would not simply leave us with the other weight;
in essence, the remaining one would replace the ratio of the original two.
Consequently, using only $\alpha$ or only $\beta$ would entail unnecessary
algebraic complications and losses of generality, because many formulas would
require division. For instance, with $\beta\equiv1$ Proposition \ref{prop1}
would state that replacing $\alpha(a)$ with $\alpha^{\prime}(a)=r_{1}%
\alpha(a)/(r_{1}+r_{2})$ results in a graph $G^{\prime}$ with $(r_{1}%
+r_{2})q(G^{\prime})=r_{1}q(G)+r_{2}q(G-a)$. The special case $r_{1}=-r_{2}$
would require a separate statement, and of course $\beta(a)=0$ would be ruled out.

\bigskip

Our first proposition follows immediately from Definition \ref{2varq}.

\begin{proposition}
\label{prop1}Suppose $a\in V(G)$ and $r_{1},r_{2}\in R$. Let $G^{\prime}$ be
obtained from $G$ by changing the weights of $a$ to $\alpha^{\prime}%
(a)=r_{1}\alpha(a)$ and $\beta^{\prime}(a)=r_{1}\beta(a)+r_{2}$. Then
$q(G^{\prime})=r_{1}q(G)+r_{2}q(G-a)$.
\end{proposition}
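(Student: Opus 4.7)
The plan is to work directly from Definition \ref{2varq} by splitting the sum defining $q(G')$ according to whether $a \in S$ or not, since $a$ is the only vertex whose weights differ between $G$ and $G'$.

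First, I would write
\[
q(G') = \sum_{a \in S}(\cdots) + \sum_{a \notin S}(\cdots),
\]
where both sums are over subsets $S \subseteq V(G') = V(G)$. For the subsets $S$ containing $a$, the weight factor at $a$ is $\alpha'(a) = r_1\alpha(a)$, while all other weights and the induced subgraph $G'[S] = G[S]$ are unchanged; hence this sum equals $r_1$ times the $a \in S$ portion of $q(G)$. For subsets $S$ not containing $a$, the weight factor at $a$ is $\beta'(a) = r_1\beta(a) + r_2$, so I can split each such term as a sum of two pieces.

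The $r_1\beta(a)$ piece reconstructs, together with the previous contribution, exactly $r_1$ times the full sum $q(G)$. The remaining $r_2$ piece, summed over $S \subseteq V(G) \setminus \{a\}$, reads
\[
r_2 \sum_{S \subseteq V(G-a)} \Bigl(\prod_{s \in S}\alpha(s)\Bigr)\Bigl(\prod_{v \in V(G-a)\setminus S}\beta(v)\Bigr)(x-1)^{r(G[S])}(y-1)^{n(G[S])}.
\]
Since $a \notin S$ means $G[S] = (G-a)[S]$, the rank and nullity exponents are those of $(G-a)[S]$, and the product of $\beta$'s ranges over the correct complement in $V(G-a)$. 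This expression is therefore exactly $r_2 q(G-a)$.

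Adding the two contributions yields $q(G') = r_1 q(G) + r_2 q(G-a)$. No step here is a genuine obstacle; the only thing to be careful about is the bookkeeping observation that $G[S] = (G-a)[S]$ when $a \notin S$, so that the ranks, nullities, and $\beta$-products all align with those appearing in $q(G-a)$.
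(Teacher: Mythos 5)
Your proof is correct and is exactly the argument the paper has in mind: the paper offers no written proof, stating only that the proposition ``follows immediately from Definition \ref{2varq},'' and your direct expansion of the defining sum, split according to whether $a\in S$, is precisely that immediate verification.
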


A fundamental property of the interlace polynomials is that they can be
calculated recursively with the local complementation and pivot operations
used by Kotzig \cite{Ko}, Bouchet \cite{Bc, B} and Arratia, Bollob\'{a}s and
Sorkin \cite{A1, A2, A}.

\begin{definition}
(Local Complementation) If $a$ is a vertex of $G$ then $G^{a}$ is obtained
from $G$ by toggling\ adjacencies $\{x,y\}$ involving neighbors of $a$ that
are distinct from $a$.
\end{definition}

\begin{definition}
(Pivot) If $a$ and $b$ are distinct vertices of $G$ then the graph $G^{ab}$ is
obtained from $G$ by toggling\ adjacencies $\{x,y\}$ such that $x,y\notin
\{a,b\}$, $x$ is adjacent to $a$ in $G$, $y$ is adjacent to $b$ in $G$, and
either $x$ is not adjacent to $b$ or $y$ is not adjacent to $a$.
\end{definition}

Note that local complementation includes loop-toggling (when $x=y$ is a
neighbor of $a$ distinct from $a$) but pivoting does not.

\begin{theorem}
\label{theorem1}If $G$ is a weighted graph then $q(G)$ can be calculated
recursively using the following properties.

(a) If $a$ is a looped vertex then
\[
q(G)=\beta(a)q(G-a)+\alpha(a)(x-1)q(G^{a}-a).
\]

(b) If $a$ and $b$ are loopless neighbors then
\[
q(G)=\beta(a)q(G-a)+\beta(b)q(G^{ab}-b)+(\alpha(a)\alpha(b)(x-1)^{2}%
-\beta(a)\beta(b))q(G^{ab}-a-b).
\]

(c) If $G$ has no non-loop edges then $q(G)$ is
\[
\left(
%TCIMACRO{\dprod \limits_{\mathrm{unlooped}~v\in V(G)}}%
%BeginExpansion
{\displaystyle\prod\limits_{\mathrm{unlooped}~v\in V(G)}}
%EndExpansion
\left(  \alpha(v)(y-1)+\beta(v)\right)  \right)  \cdot\left(
%TCIMACRO{\dprod \limits_{\mathrm{looped}~v\in V(G)}}%
%BeginExpansion
{\displaystyle\prod\limits_{\mathrm{looped}~v\in V(G)}}
%EndExpansion
\left(  \alpha(v)(x-1)+\beta(v)\right)  \right)  .
\]

\end{theorem}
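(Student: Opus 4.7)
For all three parts my plan is to return to the defining sum of $q(G)$ and partition the subsets $S\subseteq V(G)$ according to $S\cap\{a\}$ (for~(a)) or $S\cap\{a,b\}$ (for~(b)), matching each piece against the corresponding contribution on the right-hand side via standard rank/nullity identities for principal submatrices of the $GF(2)$ adjacency matrix $M$ of $G$. Part~(c) is the easiest: when $G$ has no non-loop edges, every $M[S]$ is diagonal, so $r(G[S])$ counts the looped vertices of $S$ and $n(G[S])$ counts the unlooped ones; hence the defining sum factors vertex-by-vertex, producing the stated product formula.

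For parts (a) and (b) the key inputs are three pivot identities. (i)~If $a\in S$ is looped, then $r(G[S])=1+r((G^{a}-a)[S-a])$ and $n(G[S])=n((G^{a}-a)[S-a])$, proved by using the diagonal $1$ at $(a,a)$ as a Gaussian pivot and identifying the Schur complement with the adjacency matrix of $(G^{a}-a)[S-a]$. (ii)~If $a,b$ are loopless neighbors and $\{a,b\}\subseteq S$, then $r(G[S])=2+r(G^{ab}[S-a-b])$ and $n(G[S])=n(G^{ab}[S-a-b])$, via block pivoting on the $\bigl(\begin{smallmatrix}0&1\\1&0\end{smallmatrix}\bigr)$ submatrix on $\{a,b\}$, whose Schur complement is $G[S-a-b]+uv^{T}+vu^{T}$ (with $u,v$ the $a$- and $b$-adjacency vectors on $S-a-b$); direct computation identifies this with the adjacency matrix of $G^{ab}[S-a-b]$. (iii)~If $a,b$ are loopless neighbors and exactly one of $a,b$, say $a$, lies in $S$, then $r(G[S])=r(G^{ab}[S])$ and $n(G[S])=n(G^{ab}[S])$; I would verify this by exhibiting the unipotent matrix $L=I+\sum_{x\in S-a}G_{xb}\,E_{x,a}$, which is invertible over $GF(2)$, and checking directly that $L\,M[S]\,L^{T}$ has $(x,y)$-entry $G_{xy}\oplus G_{xa}G_{yb}\oplus G_{xb}G_{ya}$ for $x,y\in S-a$, i.e.\ exactly the corresponding entry of the adjacency matrix of $G^{ab}[S]$.

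Given these identities, (a) falls out immediately: subsets with $a\notin S$ contribute $\beta(a)q(G-a)$, while subsets with $a\in S$ contribute $\alpha(a)(x-1)q(G^{a}-a)$ after pulling out $\alpha(a)(x-1)$ and reindexing by $T=S-a$. For~(b) I split by $S\cap\{a,b\}$ into four cases. The two cases with $a\notin S$ together produce $\beta(a)q(G-a)$. The case $\{a,b\}\subseteq S$ matches the $\alpha(a)\alpha(b)(x-1)^{2}q(G^{ab}-a-b)$ part of the third term via~(ii). The case $a\in S,\,b\notin S$ matches the part of $\beta(b)q(G^{ab}-b)$ indexed by those $T\ni a$ via~(iii). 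The remaining part of $\beta(b)q(G^{ab}-b)$ (those $T\subseteq V-a-b$) and the $-\beta(a)\beta(b)q(G^{ab}-a-b)$ part are equal sums over $U\subseteq V-a-b$ with the same vertex weights and the same factor $(x-1)^{r(G^{ab}[U])}(y-1)^{n(G^{ab}[U])}$, so they cancel. The principal obstacle is identity~(iii), since unlike (i) and~(ii) it is not a direct consequence of classical block Gaussian elimination; the rest of the argument is pure bookkeeping.
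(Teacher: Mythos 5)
Your proposal is correct and follows essentially the same route as the paper: both expand $q(G)$ from the defining sum by partitioning subsets according to membership of $a$ (and $b$), use the local-complementation/pivot rank identities (the paper displays your identity (i) explicitly, and its remark that subsets with $a,b\notin S$ contribute to all three terms with the last two cancelling is exactly your cancellation step), and obtain (c) directly from the definition. The only difference is one of completeness: you supply in full the matrix details---notably identity (iii) via the unipotent congruence $L\,M[S]\,L^{T}$---that the paper defers to the proofs of the corresponding unweighted formulas in Arratia--Bollob\'as--Sorkin.
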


\begin{proof}
The proofs of parts (a) and (b) of Theorem \ref{theorem1} are essentially the
same as the proofs of the corresponding formulas for the unweighted
two-variable interlace polynomial \cite{A}. Indeed, if we read $\alpha$ as
\textquotedblleft includes\textquotedblright\ and $\beta$ as \textquotedblleft
excludes\textquotedblright\ then\ the weighted formulas serve as mnemonic
devices to recall the proofs.

For instance part (a) is proven as follows. The term $\beta(a)q(G-a)$ reflects
the fact that if $a\not \in S\subseteq V(G)$ then $G[S]=(G-a)[S]$, so the
contributions of $S$ to $q(G)$ and $q(G-a)$ differ only by a factor $\beta
(a)$. The term $\alpha(a)(x-1)q(G^{a}-a)$ reflects the fact that if $a\in
S\subseteq V(G)$ then
\[
r(G[S])=r
\begin{pmatrix}
1 & \mathbf{1} & \mathbf{0}\\
\mathbf{1} & M_{11} & M_{12}\\
\mathbf{0} & M_{21} & M_{22}%
\end{pmatrix}
=1+r
\begin{pmatrix}
M_{11}^{c} & M_{12}\\
M_{21} & M_{22}%
\end{pmatrix}
=1+r((G^{a}-a)[S-a]),
\]
where bold numerals represent rows and columns and $M_{11}^{c}$ differs from
$M_{11}$ in every entry. Consequently the contributions of $S$ to $q(G)$ and
$S-\{a\}$ to $q(G^{a}-a)$ differ by a factor $\alpha(a)(x-1)$.

The more complicated formula of part (b) reflects, among other things, the
fact that each subset $S\subseteq V(G)$ with $a,b\not \in S$ contributes to
all three terms; the last two contributions cancel each other.

Part (c) follows directly from\ Definition \ref{2varq}. It is not technically
necessary to mention graphs with loops in (c), as loops can always be removed
using part (a). However applying part (a) to completely disconnected graphs is
obviously inefficient.
\end{proof}

\bigskip

Other properties of the unweighted interlace polynomials also extend naturally
to the weighted polynomials. For instance, the second and third parts of
Theorem \ref{theorem2} below extend two properties discussed in Section 3 of
\cite{A}. The first part will be useful in Section 4, where we discuss
substituted graphs.

\bigskip

\begin{theorem}
\label{theorem2}(a) If $a\in V(G)$ is loopless then
\[
q(G)-\beta(a)q(G-a)=q(G^{a})-\beta(a)q(G^{a}-a).
\]

(b) If $a,b\in V(G)$ are loopless neighbors then
\[
q(G-a)-\beta(a)q(G-a-b)=q(G^{ab}-a)-\beta(a)q(G^{ab}-a-b).
\]

(c) If $G$ is the union of disjoint subgraphs $G_{1}$ and $G_{2}$ then
$q(G)=q(G_{1})q(G_{2})$.
\end{theorem}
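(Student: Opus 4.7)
The plan is to dispatch the three parts in increasing order of difficulty. Part (c) is essentially immediate from Definition \ref{2varq}: subsets $S\subseteq V(G)=V(G_{1})\sqcup V(G_{2})$ correspond bijectively to pairs $(S_{1},S_{2})$ with $S_{i}=S\cap V(G_{i})$; since $G[S]$ is the disjoint union of $G_{1}[S_{1}]$ and $G_{2}[S_{2}]$ its adjacency matrix is block-diagonal, so $r(G[S])=r(G_{1}[S_{1}])+r(G_{2}[S_{2}])$ and $n(G[S])=n(G_{1}[S_{1}])+n(G_{2}[S_{2}])$; the weight product factors over $V(G_{1})$ and $V(G_{2})$; and the whole sum therefore factors as $q(G_{1})q(G_{2})$.

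For (a), I would split both sides of the identity according to whether the indexing subset $S$ contains $a$. Since $G[S]=(G-a)[S]$ whenever $a\notin S$, the contribution of such $S$ to $q(G)$ is exactly $\beta(a)q(G-a)$, so $q(G)-\beta(a)q(G-a)$ is the partial sum of $q(G)$ over $S\ni a$; the same cancellation works on the right, since $V(G^{a})=V(G)$ with identical weights. The identity then reduces to showing that, for every $S\ni a$, the matrices of $G[S]$ and $G^{a}[S]$ have equal $GF(2)$-rank and nullity. The first observation is that the operations commute when $a\in S$: $G^{a}[S]=(G[S])^{a}$, because the edges toggled by either procedure are precisely the neighbor-pairs of $a$ lying inside $S$. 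The second observation is that local complementation at a loopless vertex preserves $GF(2)$-rank. Writing the adjacency matrix with $a$ first as $M=\bigl(\begin{smallmatrix}0 & \mathbf{v}^{T}\\ \mathbf{v} & N\end{smallmatrix}\bigr)$, its local complement is $M'=\bigl(\begin{smallmatrix}0 & \mathbf{v}^{T}\\ \mathbf{v} & N+\mathbf{v}\mathbf{v}^{T}\end{smallmatrix}\bigr)$; adding row $1$ to each row indexed by a $1$ in $\mathbf{v}$, followed by the analogous column operation, adds $\mathbf{v}\mathbf{v}^{T}+\mathbf{v}\mathbf{v}^{T}=0$ to the lower-right block while leaving the first row and first column unchanged, so $M'$ returns to $M$. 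Matching matrix sizes give equal nullities.

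For (b), I would follow the same template, using pivot in place of local complementation. Splitting both sides by whether $b$ belongs to the indexing subset $S\subseteq V(G)-\{a\}$ reduces the identity to the claim that $(G-a)[S]$ and $(G^{ab}-a)[S]$ have equal $GF(2)$-rank and nullity whenever $b\in S$ (so the coefficient multiplying the ``$-b$'' terms should be read as $\beta(b)$, extracted from the subsets not containing $b$). A direct unpacking of the pivot definition shows that the adjacency matrix of $G^{ab}$ differs from that of $G$ only in the $(V(G)-\{a,b\})$-block, where the correction is $\mathbf{r}\mathbf{s}^{T}+\mathbf{s}\mathbf{r}^{T}$ with $\mathbf{r}$ and $\mathbf{s}$ the indicator vectors of $N(a)$ and $N(b)$ in $V(G)-\{a,b\}$. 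Writing the matrix of $(G-a)[S]$ with $b$ first as $\bigl(\begin{smallmatrix}0 & \mathbf{s}_{S}^{T}\\ \mathbf{s}_{S} & T_{S}\end{smallmatrix}\bigr)$, the matrix of $(G^{ab}-a)[S]$ is obtained by adding $\mathbf{r}_{S}\mathbf{s}_{S}^{T}+\mathbf{s}_{S}\mathbf{r}_{S}^{T}$ to the $T_{S}$-block. Adding row $1$ to every row indexed by a $1$ in $\mathbf{r}_{S}$ cancels the term $\mathbf{r}_{S}\mathbf{s}_{S}^{T}$, and the analogous column operation cancels $\mathbf{s}_{S}\mathbf{r}_{S}^{T}$, so the modified matrix is row/column-equivalent to the original and the $GF(2)$-rank and nullity are preserved.

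The main obstacle is the linear-algebra cancellation in (b): two distinct outer-product terms must be killed by a single pass of row operations followed by a single pass of column operations, both using only the row and column of $b$. The key observation is that the first row is $(0,\mathbf{s}_{S}^{T})$, so adding it to the rows indexed by $\mathbf{r}_{S}$ produces exactly $\mathbf{r}_{S}\mathbf{s}_{S}^{T}$ in the lower-right block, cancelling one term; the symmetric column step handles the other. Everything else --- the subset splittings, the weight-product bookkeeping, and the block decomposition in (c) --- follows directly from the definitions.
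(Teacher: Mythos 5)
There is a genuine error in your part (a). You transform $M'$ by adding row $1$ to the rows indexed by $\mathbf{v}$ \emph{and then} adding column $1$ to the columns indexed by $\mathbf{v}$, and your own bookkeeping shows that the combined effect on the lower-right block is $\mathbf{v}\mathbf{v}^{T}+\mathbf{v}\mathbf{v}^{T}=0$. A net change of zero means the two passes carry $M'$ back to $M'$, not to $M$; the concluding phrase ``so $M'$ returns to $M$'' contradicts the computation immediately preceding it, and as written nothing about $r(M)$ versus $r(M')$ has been established. The template you imported from part (b) is exactly what breaks here: in (b) the correction $\mathbf{r}_{S}\mathbf{s}_{S}^{T}+\mathbf{s}_{S}\mathbf{r}_{S}^{T}$ consists of two distinct terms, and the row pass kills one while the column pass kills the other; in (a) the correction is the single symmetric term $\mathbf{v}\mathbf{v}^{T}$, so the column pass reinstates precisely what the row pass removed. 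The repair is immediate, and it is what the paper's proof does: use \emph{only} the row operations. Adding row $1=(0\;\;\mathbf{v}^{T})$ to each row indexed by a $1$ in $\mathbf{v}$ adds $\mathbf{v}\mathbf{v}^{T}$ to the lower-right block and leaves the first column unchanged (its top entry is $0$ because $a$ is loopless), so this single pass carries $M$ to $M'$; row operations preserve $GF(2)$-rank, and equal ranks for matrices of equal size give equal nullities. Nothing requires the transformation to respect symmetry, so there is no reason to insist on a two-sided, congruence-style operation.

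The rest of your proposal is sound. Part (c) is the standard block-diagonal argument, and part (b) is correct and in fact more detailed than the paper, which disposes of both (b) and (c) by citing the corresponding arguments of \cite{A}. In (b) your matrix description of the pivot is right: the ``either $x$ is not adjacent to $b$ or $y$ is not adjacent to $a$'' clause in the definition of $G^{ab}$ is exactly the condition that the $(x,y)$ entry of $\mathbf{r}\mathbf{s}^{T}+\mathbf{s}\mathbf{r}^{T}$ equals $1$, and the diagonal is untouched since each diagonal entry of that matrix is $2r_{x}s_{x}=0$. Your decision to read the coefficient as $\beta(b)$ rather than the printed $\beta(a)$ is also correct: deleting the subsets not containing $b$ extracts a factor $\beta(b)$, and with $\beta(a)$ the printed identity is false for generic weights, since the two sides of the printed equation differ by $(\beta(b)-\beta(a))\left(q(G-a-b)-q(G^{ab}-a-b)\right)$, which is nonzero already for the four-vertex path on $c,a,b,d$. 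So your proof silently corrects what is evidently a typo in the statement.
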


\begin{proof}
To prove (a), note that if $a\in S\subseteq V(G)$ then the adjacency matrix of
$G[S]$ may be represented by
\[%
\begin{pmatrix}
0 & \mathbf{1} & \mathbf{0}\\
\mathbf{1} & M_{11} & M_{12}\\
\mathbf{0} & M_{21} & M_{22}%
\end{pmatrix}
.
\]
Adding the first row to each of those in the second group results in
\[%
\begin{pmatrix}
0 & \mathbf{1} & \mathbf{0}\\
\mathbf{1} & M_{11}^{c} & M_{12}\\
\mathbf{0} & M_{21} & M_{22}%
\end{pmatrix}
,
\]
\noindent the adjacency matrix of $G^{a}[S]$. Definition \ref{2varq} then
tells us that

\smallskip%
\begin{align*}
&  q(G)-\beta(a)q(G-a)\\
&  ~\\
&  =\sum_{a\in S\subseteq V(G)}(%
%TCIMACRO{\dprod \limits_{s\in S}}%
%BeginExpansion
{\displaystyle\prod\limits_{s\in S}}
%EndExpansion
\alpha(s))(%
%TCIMACRO{\dprod \limits_{v\not \in S}}%
%BeginExpansion
{\displaystyle\prod\limits_{v\not \in S}}
%EndExpansion
\beta(v))(x-1)^{r(G[S])}(y-1)^{n(G[S])}\\
&  ~\\
&  =\sum_{a\in S\subseteq V(G)}(%
%TCIMACRO{\dprod \limits_{s\in S}}%
%BeginExpansion
{\displaystyle\prod\limits_{s\in S}}
%EndExpansion
\alpha(s))(%
%TCIMACRO{\dprod \limits_{v\not \in S}}%
%BeginExpansion
{\displaystyle\prod\limits_{v\not \in S}}
%EndExpansion
\beta(v))(x-1)^{r(G^{a}[S])}(y-1)^{n(G^{a}[S])}\\
&  ~\\
&  =q(G^{a})-\beta(a)q(G^{a}-a).
\end{align*}

\smallskip

The proofs of (b) and (c) are essentially the same as the proofs of the
corresponding results in \cite{A}.
\end{proof}

\bigskip

Many properties of the unweighted interlace polynomials extend directly to the
weighted polynomials, as we see in Theorems \ref{theorem1} and \ref{theorem2}.
It may be a surprise that some properties of the unweighted interlace
polynomials can be significantly simplified using vertex weights. For
instance, consider a computation tree representing a recursive implementation
of Theorem \ref{theorem1}. The tree has two branches for each application of
part (a), three branches for each application of part (b), and a leaf for each
application of part (c). As noted in the proof of Theorem \ref{theorem1}, the
three terms of part (b) all incorporate contributions from the same subsets
$S\subseteq V(G)$. The recursive computation calculates these same
contributions three times, on separate branches. This inefficiency can be
eliminated by rephrasing part (b) of Theorem \ref{theorem1} so that no
three-fold branches are necessary.

\begin{corollary}
\label{cor1}If $a$ and $b$ are loopless neighbors in~$G$ then $q(G)=\beta
(a)q(G-a)+\alpha(a)q((G^{ab}-b)^{\prime})$, where $(G^{ab}-b)^{\prime}$ is
obtained from $G^{ab}-b$ by changing the weights of $a$ to $\alpha^{\prime
}(a)=\beta(b)$ and $\beta^{\prime}(a)=\alpha(b)(x-1)^{2}$.
\end{corollary}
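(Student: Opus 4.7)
The plan is to derive Corollary \ref{cor1} by combining the last two terms of Theorem \ref{theorem1}(b) into a single term using Proposition \ref{prop1}. Theorem \ref{theorem1}(b) already gives
\[
q(G)=\beta(a)q(G-a)+\beta(b)q(G^{ab}-b)+\bigl(\alpha(a)\alpha(b)(x-1)^{2}-\beta(a)\beta(b)\bigr)q(G^{ab}-a-b),
\]
so what remains is to show that the sum of the last two terms equals $\alpha(a)q((G^{ab}-b)^{\prime})$.

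First I would set $H=G^{ab}-b$, noting that the weights of $a$ in $H$ are still $\alpha(a)$ and $\beta(a)$, and that $H-a=G^{ab}-a-b$. Then I would apply Proposition \ref{prop1} to $H$ at the vertex $a$ with the choice $r_{1}=\beta(b)$ and $r_{2}=\alpha(a)\alpha(b)(x-1)^{2}-\beta(a)\beta(b)$. This produces a graph $H^{\ast}$ whose weights at $a$ are
\[
\alpha^{\ast}(a)=r_{1}\alpha(a)=\alpha(a)\beta(b),\qquad \beta^{\ast}(a)=r_{1}\beta(a)+r_{2}=\alpha(a)\alpha(b)(x-1)^{2},
\]
after cancellation of the $\beta(a)\beta(b)$ terms. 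Proposition \ref{prop1} then yields
\[
q(H^{\ast})=\beta(b)q(G^{ab}-b)+\bigl(\alpha(a)\alpha(b)(x-1)^{2}-\beta(a)\beta(b)\bigr)q(G^{ab}-a-b),
\]
which is exactly the pair of terms I need to absorb.

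The last step is to observe that $\alpha(a)$ is a common factor of both $\alpha^{\ast}(a)$ and $\beta^{\ast}(a)$. Since these are the only weights in $H^{\ast}$ that differ from those in $(G^{ab}-b)^{\prime}$, and each subset $S\subseteq V(H^{\ast})$ contributes either $\alpha^{\ast}(a)$ (if $a\in S$) or $\beta^{\ast}(a)$ (if $a\notin S$) to its term in Definition \ref{2varq}, the factor $\alpha(a)$ pulls out of the whole sum, giving $q(H^{\ast})=\alpha(a)\,q((G^{ab}-b)^{\prime})$. Substituting back into Theorem \ref{theorem1}(b) finishes the proof.

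There is no real obstacle here; the only subtle point is choosing $r_{1}$ and $r_{2}$ so that Proposition \ref{prop1} reproduces the exact coefficients appearing in Theorem \ref{theorem1}(b), and then verifying that the resulting weights at $a$ share a common factor of $\alpha(a)$. Both are straightforward once one writes out the system.
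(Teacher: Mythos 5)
Your proof is correct and follows essentially the same route as the paper: apply Proposition \ref{prop1} to $G^{ab}-b$ at $a$ with $r_{1}=\beta(b)$ and $r_{2}=\alpha(a)\alpha(b)(x-1)^{2}-\beta(a)\beta(b)$, observe the resulting weights $\alpha(a)\beta(b)$ and $\alpha(a)\alpha(b)(x-1)^{2}$ share the common factor $\alpha(a)$, and pull that factor out to identify the last two terms of Theorem \ref{theorem1}(b) with $\alpha(a)q((G^{ab}-b)^{\prime})$. Your $H^{\ast}$ is exactly the paper's $(G^{ab}-b)^{\prime\prime}$, so the arguments coincide step for step.
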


\begin{proof}
Let $(G^{ab}-b)^{\prime\prime}$ be the graph obtained from $G^{ab}-b$ by
changing the weights of $a$ as in Proposition \ref{prop1}, with $r_{1}%
=\beta(b)$ and $r_{2}=\alpha(a)\alpha(b)(x-1)^{2}$ -- $\beta(a)\beta(b)$. Then
$\alpha^{\prime\prime}(a)=a(a)\beta(b)$ and $\beta^{\prime\prime}%
(a)=\alpha(a)\alpha(b)(x-1)^{2}$, so $q((G^{ab}-b)^{\prime\prime})$ =
$\alpha(a)q((G^{ab}-b)^{\prime})$. On the other hand, Proposition \ref{prop1}
tells us that
\[
q((G^{ab}-b)^{\prime\prime})=\beta(b)q(G^{ab}-b)+(\alpha(a)\alpha
(b)(x-1)^{2}-\beta(a)\beta(b))q(G^{ab}-b-a).
\]

\end{proof}

\bigskip

If $\alpha^{\prime}(a)=\beta(b)/(x-1)$ and $\beta^{\prime}(a)=\alpha(b)(x-1)$
are used instead of the weights given in Corollary \ref{cor1}, then the
resulting formula $q(G)=\beta(a)q(G-a)+\alpha(a)(x-1)q((G^{ab}-b)^{\prime})$
still has only two branches, and bears an interesting resemblance to part (a)
of Theorem \ref{theorem1}. However using division to define $\alpha^{\prime
}(a)$ may occasionally cause some algebraic difficulties, e.g., it complicates
the evaluation at $x=1$, and it prohibits the use of rings in which $x-1$ is a
divisor of zero.

Corollary \ref{cor1} is one of several results in which vertex weights allow
us to extend properties of the unweighted version of $q_{N}$, seemingly the
simplest kind of interlace polynomial, to the other interlace polynomials. In
this instance the extended property is that of possessing a recursive
description represented by a binary computation tree. Another result of this
type is Corollary \ref{cor2}, which extends Remark 18 of \cite{A2}: if $a$ and
$b$ are loopless neighbors in $G$ then the unweighted vertex-nullity interlace
polynomials of $G$ and $G^{ab}$ are the same.

\begin{corollary}
\label{cor2}Let $a$ and $b$ be unlooped neighbors in $G$, and let
$(G^{ab})^{\prime}$ be the weighted graph obtained from $G^{ab}$ by changing
the weights of $a$ and $b$ to $\alpha^{\prime}(a)=\beta(b)$, $\beta^{\prime
}(a)=\alpha(b)(x-1)^{2}$, $\alpha^{\prime}(b)=\beta(a)$ and $\beta^{\prime
}(b)=\alpha(a)(x-1)^{2}$. Then $(x-1)^{2}q(G)=q((G^{ab})^{\prime})$.
\end{corollary}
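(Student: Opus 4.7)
The plan is to apply Corollary \ref{cor1} twice: once to expand $q((G^{ab})')$, and once in symmetric form to rewrite one of the two resulting terms.

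I first apply Corollary \ref{cor1} to the weighted graph $H := (G^{ab})'$ at the loopless neighbors $a,b$, which gives
\[
q(H) = \beta'(a)\, q(H - a) + \alpha'(a)\, q((H^{ab} - b)^{*}),
\]
where $(H^{ab} - b)^{*}$ is obtained from $H^{ab} - b$ by resetting $a$'s weights to $\alpha^{*}(a) = \beta'(b)$ and $\beta^{*}(a) = \alpha'(b)(x-1)^{2}$. Since pivoting is involutive on the underlying graph, $H^{ab}$ has underlying graph $G$, with the modified weights of $H$ at $a,b$ and the original weights elsewhere. Substituting the defining values of $\alpha', \beta'$, I compute $\alpha^{*}(a) = \alpha(a)(x-1)^{2}$ and $\beta^{*}(a) = \beta(a)(x-1)^{2}$, so $(H^{ab} - b)^{*}$ is $G - b$ with $a$'s original weights uniformly multiplied by $(x-1)^{2}$. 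Proposition \ref{prop1} (with $r_{1} = (x-1)^{2}$ and $r_{2} = 0$) therefore yields $q((H^{ab} - b)^{*}) = (x-1)^{2}\, q(G - b)$.

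The vertex $b$ in $H - a$ carries the inherited weights $\alpha'(b) = \beta(a)$ and $\beta'(b) = \alpha(a)(x-1)^{2}$; but this is precisely the reweighting prescribed by Corollary \ref{cor1} when that Corollary is applied to $G$ with the roles of $a$ and $b$ interchanged (using $G^{ba} = G^{ab}$). So the symmetric form of Corollary \ref{cor1} gives $\alpha(b)\, q(H - a) = q(G) - \beta(b)\, q(G - b)$. Combining this with $\beta'(a) = \alpha(b)(x-1)^{2}$, $\alpha'(a) = \beta(b)$, and the identification of $q((H^{ab} - b)^{*})$ above produces
\[
q((G^{ab})') = (x-1)^{2}\bigl(\alpha(b)\, q(H - a) + \beta(b)\, q(G - b)\bigr) = (x-1)^{2}\, q(G),
\]
since the two $\beta(b)\,q(G-b)$ contributions cancel.

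The only real obstacle is the bookkeeping of which weights survive after the repeated pivots and removals: in $(H^{ab} - b)^{*}$ the Corollary-prescribed reset of $a$'s weights overrides the weights inherited from $H$, whereas in $H - a$ the inherited weights at $b$ happen to coincide with the reweighting prescribed by the symmetric form of Corollary \ref{cor1} for $G$, which is exactly what makes the two applications fit together.
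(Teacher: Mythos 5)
Your proof is correct and is essentially the paper's own argument up to swapping the roles of $a$ and $b$: the paper applies Corollary \ref{cor1} to $(G^{ab})^{\prime}$ with $b$ playing the first role and matches the resulting terms against Corollary \ref{cor1} applied to $G$ with $a$ first, while you do the mirror image, and since the statement is symmetric in $a$ and $b$ the two arguments coincide. Both use exactly the same ingredients --- involutivity of the pivot, the weight-scaling case of Proposition \ref{prop1}, and the identification of the surviving reweighted graphs with those prescribed by Corollary \ref{cor1}.
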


\begin{proof}
Applying Corollary \ref{cor1} to $(G^{ab})^{\prime}$, with the roles of $a$
and $b$ reversed, tells us that $q((G^{ab})^{\prime})$ = $\beta^{\prime
}(b)q((G^{ab})^{\prime}-b)+\alpha^{\prime}(b)q((((G^{ab})^{\prime}%
)^{ab}-a)^{\prime})$. Observe that $(((G^{ab})^{\prime})^{ab}-a)^{\prime} $
has the underlying graph $(G^{ab})^{ab}-a=G-a$, and differs from $G-a$ only in
the weights of $b$, which are given by $\alpha^{\prime\prime}(b)=\beta
^{\prime}(a)=\alpha(b)(x-1)^{2}$ and $\beta^{\prime\prime}(b)=\alpha^{\prime
}(a)(x-1)^{2}=\beta(b)(x-1)^{2}$. Consequently $\alpha^{\prime}(b)q((((G^{ab}%
)^{\prime})^{ab}-a)^{\prime})$ = $\alpha^{\prime}(b)(x-1)^{2}q(G-a)$ =
$\beta(a)(x-1)^{2}q(G-a)$. As $\beta^{\prime}(b)q((G^{ab})^{\prime}-b)$ =
$\alpha(a)(x-1)^{2}q((G^{ab}-b)^{\prime})$, the result follows directly
from\ Corollary \ref{cor1}.
\end{proof}

If we are confident that division by $x-1$ will cause no trouble, Corollary
\ref{cor2} may be restated with a simpler conclusion: using $\alpha^{\prime
}(a)=\beta(b)/(x-1)$, $\beta^{\prime}(a)=\alpha(b)(x-1)$, $\alpha^{\prime
}(b)=\beta(a)/(x-1)$ and $\beta^{\prime}(b)=\alpha(a)(x-1)$ yields
$q(G)=q((G^{ab})^{\prime})$.

\section{Pendant-twin reductions}

Other novel properties of the weighted interlace polynomials arise from a
general observation. Suppose $a,b\in V(G)$ happen to have the property that
for $S\subseteq V(G)$, the rank and nullity of $S$ are determined by the rank
and nullity of $S-\{b\}$, perhaps in different ways according to which of
$a,b$ is contained in $S$. Then it may be possible to adjust $\alpha(a)$ and
$\beta(a)$ in such a way that the weighted interlace polynomial of the
weight-adjusted version of $G-b$ incorporates all the information in $q(G)$.
The simplest instance of this observation occurs when $a$ and $b$ give rise to
identical rows and columns in the adjacency matrix of $G$.

\begin{definition}
Two vertices $a,b$ of $G$ are\emph{\ identical twins }if (i) either they are
looped and adjacent or they are unlooped and not adjacent, and (ii) they have
the same neighbors outside $\{a,b\}$.
\end{definition}

Identical twins are called \textit{clones} in \cite{BH}, and unlooped
identical twins are called \textit{duplicates} in \cite{A2} and \textit{false
twins} in \cite{EMS}. We prefer the present terminology because the adjective
\textit{false} seems inappropriate, and because we do not know what we would
call non-identical duplicates or clones in Definition 7 below. The following
result extends Proposition 40 of \cite{A2}, Proposition 4.14 of \cite{EMS} and
Section 3.1 of \cite{BH} to vertex-weighted graphs.

\begin{theorem}
\label{theorem3}Suppose $a$ and $b$ are identical twins in $G$. Let
$G^{\prime}$ be the graph obtained from $G-b$ by changing the weights of $a$:
$\beta^{\prime}(a)=\beta(a)\beta(b)$ and $\alpha^{\prime}(a)=\alpha
(a)\beta(b)+\alpha(a)\alpha(b)(y-1)+\beta(a)\alpha(b)$. Then $q(G)=q(G^{\prime
})$.
\end{theorem}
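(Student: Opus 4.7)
The plan is to partition the sum defining $q(G)$ according to the intersection $S\cap\{a,b\}$ and match contributions with the two-term sum defining $q(G')$. For each $T\subseteq V(G)-\{a,b\}$, four subsets $S\subseteq V(G)$ restrict to $T$ on the complement of $\{a,b\}$, namely $T$, $\{a\}\cup T$, $\{b\}\cup T$, and $\{a,b\}\cup T$; on the other side, only the two subsets $T$ and $\{a\}\cup T$ of $V(G')$ restrict to $T$. The weights of vertices outside $\{a,b\}$ contribute a common factor $P(T)=\prod_{t\in T}\alpha(t)\prod_{v\in V(G)-\{a,b\}-T}\beta(v)$ on both sides, so it suffices to verify, for each fixed $T$, that the weighted sum of the four rank/nullity monomials on the $G$-side agrees with the weighted sum of the two monomials on the $G'$-side.

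First I would dispatch the three easy cases. When $a,b\notin S$ one has $G[S]=G'[T]$, and when exactly one of $a,b$ lies in $S$ the identical-twin hypothesis gives $G[\{b\}\cup T]\cong G[\{a\}\cup T]=G'[\{a\}\cup T]$, so both induced subgraphs share the same rank/nullity pair $(r_{1},n_{1})$ as $G'[\{a\}\cup T]$. The heart of the argument is the case $\{a,b\}\subseteq S$. Here I would write the adjacency matrix of $G[S]$ with rows and columns ordered $a,b,T$: the $\{a,b\}$-block is either $\bigl(\begin{smallmatrix}1&1\\1&1\end{smallmatrix}\bigr)$ (if $a,b$ are looped and adjacent) or the zero block (if unlooped and non-adjacent), while the two strips joining $\{a,b\}$ to $T$ are identical vectors $v^{T}$. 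Performing the $GF(2)$ operations row $b\mapsto$ row $a+$ row $b$ and then column $b\mapsto$ column $a+$ column $b$ zeros out the $b$-row and $b$-column in both subcases and leaves the adjacency matrix of $G[\{a\}\cup T]=G'[\{a\}\cup T]$ on the remaining indices. Therefore $r(G[\{a,b\}\cup T])=r_{1}$ and $n(G[\{a,b\}\cup T])=n_{1}+1$ uniformly.

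Collecting contributions, the total weight of the four subsets extending $T$ in $q(G)$ then takes the form
\[
P(T)\Bigl[\beta(a)\beta(b)(x-1)^{r_{0}}(y-1)^{n_{0}}+\bigl(\alpha(a)\beta(b)+\beta(a)\alpha(b)+\alpha(a)\alpha(b)(y-1)\bigr)(x-1)^{r_{1}}(y-1)^{n_{1}}\Bigr],
\]
where $(r_{0},n_{0})$ is the rank/nullity pair of $G[T]$: the first monomial comes from $S=T$, and the second aggregates the three extensions with $S\cap\{a,b\}\neq\emptyset$ (the factor $(y-1)$ in the last term records the extra unit of nullity from the twin-collapse). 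The corresponding quantity on the $G'$-side is $P(T)\bigl[\beta'(a)(x-1)^{r_{0}}(y-1)^{n_{0}}+\alpha'(a)(x-1)^{r_{1}}(y-1)^{n_{1}}\bigr]$, and matching coefficients recovers exactly the definitions of $\beta'(a)$ and $\alpha'(a)$ given in the statement. The only nontrivial step is the linear-algebra computation in the $\{a,b\}\subseteq S$ case, together with the observation that the same reduction works for both flavors of identical twin; the rest is bookkeeping.
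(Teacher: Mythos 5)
Your proof is correct and follows essentially the same route as the paper: both partition the defining sum of $q(G)$ according to $S\cap\{a,b\}$ and rest on the fact that identical twins produce identical rows and columns in the adjacency matrix, so that every subset meeting $\{a,b\}$ has the same rank as its collapse to $\{a\}\cup T$ while the nullity gains one unit exactly when both twins are present. Your explicit row/column reduction in the $\{a,b\}\subseteq S$ case is just a spelled-out version of the paper's observation that a duplicated row and column can be deleted without changing the rank, and the final coefficient matching is identical.
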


\begin{proof}
If $a\not \in S\subseteq V(G^{\prime})$ then $r(G^{\prime}[S])$ = $r(G[S])$,
because the\ adjacency matrices are the same. If $a\in S\subseteq V(G^{\prime
})$ then $r(G^{\prime}[S])$ = $r(G[S])$ = $r(G[S\cup\{b\}])$ =
$r(G[(S-\{a\})\cup\{b\}])$, because the only difference among the adjacency
matrices is that the matrix of $G[S\cup\{b\}]$ has two identical rows and
columns, corresponding to $a$ and $b$. As mentioned above, it is helpful to
read $\alpha$ as \textquotedblleft includes\textquotedblright\ and $\beta$ as
\textquotedblleft excludes\textquotedblright\ so that (for instance) the
appearance of $\beta(a)\alpha(b)$ in $\alpha^{\prime}(a)$ indicates that if
$a\in S\subseteq V(G^{\prime})$\ then the summand of $q(G^{\prime})$
corresponding to $S$ includes the summand of $q(G)$ corresponding to
$(S-\{a\})\cup\{b\}$.
\end{proof}

Theorem \ref{theorem3} has the following inductive generalization. Suppose
$k\geq1$ and $a=b_{0},b_{1},...,b_{k}$ are identical twins in $G$. If
$G^{\prime}$ is the graph obtained from $G-b_{1}-...-b_{k}$ by changing the
weights of $a$ to
\begin{align*}
\alpha^{\prime}(a)  &  =\sum_{\emptyset\neq S\subseteq\{b_{0},...,b_{k}\}}(%
%TCIMACRO{\dprod \limits_{b_{i}\in S}}%
%BeginExpansion
{\displaystyle\prod\limits_{b_{i}\in S}}
%EndExpansion
\alpha(b_{i}))(%
%TCIMACRO{\dprod \limits_{b_{j}\not \in S}}%
%BeginExpansion
{\displaystyle\prod\limits_{b_{j}\not \in S}}
%EndExpansion
\beta(b_{j}))(y-1)^{\left|  S\right|  -1}\\
& \\
\mathrm{and}~\beta^{\prime}(a)  &  =%
%TCIMACRO{\dprod \limits_{i=0}^{k}}%
%BeginExpansion
{\displaystyle\prod\limits_{i=0}^{k}}
%EndExpansion
\beta(b_{i}),
\end{align*}
\noindent then $q(G)=q(G^{\prime})$. We refer to the process of
combining\ several identical twins into a single vertex as an
\textit{identical twin reduction} no matter how many\ identical twins are combined.

\begin{theorem}
\label{theorem4}Suppose $b$ is an unlooped degree-one vertex pendant on $a$.
Let $G^{\prime}$ be the graph obtained from $G-b$ by changing the weights of
$a$: $\alpha^{\prime}(a)=\alpha(a)\beta(b)$ and $\beta^{\prime}(a)=\alpha
(a)\alpha(b)(x-1)^{2}+\beta(a)\alpha(b)(y-1)+\beta(a)\beta(b)$. Then
$q(G)=q(G^{\prime})$.
\end{theorem}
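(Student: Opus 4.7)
The plan is to split the defining sum of $q(G)$ according to how each subset $S\subseteq V(G)$ meets the pair $\{a,b\}$, and compare with an analogous splitting of $q(G')$. Fix $T\subseteq V(G)\setminus\{a,b\}$; on the $G$-side the four subsets $S\in\{T,\,T\cup\{a\},\,T\cup\{b\},\,T\cup\{a,b\}\}$ contribute, while on the $G'$-side only the two subsets $T'\in\{T,\,T\cup\{a\}\}$ contribute, with the modified weights $\alpha'(a),\beta'(a)$ in place of $\alpha(a),\beta(a)$. All other vertex weights agree, so matching coefficients of each product $(x-1)^{r(\cdot)}(y-1)^{n(\cdot)}$ reduces the theorem to two identities.

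Next I would compute, for each of the four subsets $S$, the rank and nullity of $G[S]$ in terms of those of $G[T]$ and $G[T\cup\{a\}]$. The cases $S=T$ and $S=T\cup\{a\}$ are immediate, since deleting the pendant $b$ does not alter $a$'s adjacencies elsewhere, so $G'[T]=G[T]$ and $G'[T\cup\{a\}]=G[T\cup\{a\}]$. For $S=T\cup\{b\}$, the unlooped vertex $b$ is isolated in $G[S]$ because its only neighbor $a$ is not in $S$, so $r(G[S])=r(G[T])$ and $n(G[S])=n(G[T])+1$.

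The crux is the case $S=T\cup\{a,b\}$. I would work with the adjacency matrix in block form (ordering the vertices as $T,a,b$)
\[
\begin{pmatrix} M & \mathbf{u} & \mathbf{0}\\ \mathbf{u}^{T} & \ell & 1\\ \mathbf{0}^{T} & 1 & 0 \end{pmatrix},
\]
where $M$ is the adjacency matrix of $G[T]$, $\mathbf{u}$ is the indicator vector of $N_{G}(a)\cap T$, and $\ell\in\{0,1\}$ records whether $a$ is looped. Over $GF(2)$, adding the $b$-row to the $a$-row (when $\ell=1$) and to each row $v\in T$ with $u_{v}=1$ kills $\ell$ and $\mathbf{u}$ in the $a$-column; the symmetric column operations clear $\mathbf{u}^{T}$ in the $a$-row. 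The result is block-diagonal with blocks $M$ and $\bigl(\begin{smallmatrix}0 & 1\\ 1 & 0\end{smallmatrix}\bigr)$, giving $r(G[S])=r(G[T])+2$ and $n(G[S])=n(G[T])$ independently of $\ell$.

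Finally I would collect terms. The contributions from $S=T$, $S=T\cup\{b\}$, and $S=T\cup\{a,b\}$ all carry the monomial $(x-1)^{r(G[T])}(y-1)^{n(G[T])}$, and their combined coefficient on the $a$-excluded side is $\beta(a)\beta(b)+\beta(a)\alpha(b)(y-1)+\alpha(a)\alpha(b)(x-1)^{2}$, which matches the prescribed $\beta'(a)$. The contribution from $S=T\cup\{a\}$ carries $(x-1)^{r(G[T\cup\{a\}])}(y-1)^{n(G[T\cup\{a\}])}$ with coefficient $\alpha(a)\beta(b)$, which is exactly $\alpha'(a)$. Summing over $T$ yields $q(G)=q(G')$. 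The main obstacle is the block-diagonalization for $S=T\cup\{a,b\}$, since one must verify the rank/nullity formula uniformly in $\ell$; once the unlooped pendant structure is exploited the linear algebra is clean.
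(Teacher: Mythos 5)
Your proof is correct and takes essentially the same route as the paper's: both arguments work directly from the subset expansion of Definition \ref{2varq}, classify subsets by their intersection with $\{a,b\}$, and match coefficients via the rank identities $r(G[T\cup\{a,b\}])=r(G[T])+2$, $r(G[T\cup\{b\}])=r(G[T])$, and $r(G'[S])=r(G[S])$. The only difference is one of detail: the paper asserts these rank equalities without proof, whereas you supply the explicit block-matrix row/column reduction, including the uniform treatment of a possible loop at $a$.
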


\begin{proof}
If $a\in S\subseteq V(G^{\prime})$ then $r(G^{\prime}[S])$ = $r(G[S])$,\ and
if $a\not \in S\subseteq V(G^{\prime})$\ then $r(G^{\prime}[S])$ =
$r(G[S\cup\{a,b\}])-2$ = $r(G[S\cup\{b\}])$ = $r(G[S])$.
\end{proof}

If $b=b_{0},b_{1},...,b_{k}$ are unlooped and pendant on $a$ then $b_{0}%
,b_{1},...,b_{k}$ are identical twins, so we can combine them into a single
re-weighted vertex $b$ using Theorem \ref{theorem3} and then remove $b $ using
Theorem \ref{theorem4}. We refer to the removal of any number of unlooped
vertices pendant on the same vertex as an \textit{unlooped pendant vertex
reduction}. Theorem \ref{theorem4} extends Proposition 4.12 of \cite{EMS} and
Section 3.2 of \cite{BH}, where collections of pendant vertices are called
\textit{combs}.

\bigskip

A recursive calculation that depends solely on Theorem \ref{theorem1} and
Corollary \ref{cor1} is represented by a mixed binary-ternary computation
tree; if Corollary \ref{cor1} is always used in place of part (b) of Theorem
\ref{theorem1} then the tree will be binary. Using identical twin and unlooped
pendant vertex reductions makes the computation tree smaller, because each
time one of these reductions is used, we avoid splitting the resulting portion
of the calculation into branches. Similarly, a Tutte polynomial computation
that incorporates series-parallel reductions and deletion-contraction
operations will generally result in a smaller formula than a computation that
involves only deletion-contraction operations can provide. This latter
observation was made precise in \cite{T2}: computing the Tutte polynomial of a
matroid $M$ using series-parallel reductions and deletion-contraction
operations will result in an expression with at least $\beta(M)$ terms, where
$\beta(M)$ is Crapo's $\beta$ invariant \cite{Cr}; moreover if $\beta(M)>0$
the lower bound is attainable. (This result is merely the extension to the
Tutte polynomial of the important theory of reliability domination; see
\cite{BSS, C} for expositions.) For interlace polynomials, analogous lower
bounds are derived from the unweighted vertex-nullity polynomial $q_{N}%
(G^{u})$. The coefficient of $y$ in $q_{N}(G^{u})$ is denoted $\gamma(G)$ as
in \cite{EMS}, and the evaluation $q_{N}(G^{u})(0)$ is denoted $\varepsilon
(G)$.

\begin{theorem}
\label{theorem5}If part (c) of Theorem \ref{theorem1} is applied only to
loopless graphs then a computation of $q(G)$ using Corollary \ref{cor1} and
Theorems \ref{theorem1}, \ref{theorem3} and \ref{theorem4} is represented by a
computation tree with no fewer than $\frac{1}{2}\varepsilon(G)$ leaves. If $G$
is a simple graph then the computation tree has no fewer than $\frac{1}%
{2}\gamma(G)$ leaves.
\end{theorem}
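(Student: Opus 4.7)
The plan is to exploit the fact that $\varepsilon(G)$ and $\gamma(G)$ are obtained from $q_N(G^u)=q(G^u)|_{x=2}$ by evaluating at $y=0$ or by extracting the coefficient of $y$, and that under the substitution $x=2$ the vertex weights produced by Corollary~\ref{cor1} and by Theorems~\ref{theorem3} and~\ref{theorem4} turn out to be remarkably constrained. Since the shape of the computation tree depends only on the underlying graph, I may assume $G=G^u$ throughout. Any such computation expresses
\[
q(G^u) \;=\; \sum_{l \in \mathrm{leaves}(T)} P_l(x,y)\prod_{v\in V_l}\bigl(\alpha_l(v)(y-1)+\beta_l(v)\bigr),
\]
where $P_l$ is the product of branching coefficients along the root-to-$l$ path and the inner product runs over the surviving (loopless, by hypothesis) vertices of the graph on which part~(c) of Theorem~\ref{theorem1} is invoked.

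The main technical step is the inductive invariant that at every vertex $v$ of every intermediate graph,
\[
\alpha(v)\big|_{x=2,\,y=0} \;=\; \beta(v)\big|_{x=2,\,y=0} \;=\; 1.
\]
Each of the four update rules (Corollary~\ref{cor1}, Theorem~\ref{theorem1}(a), and Theorems~\ref{theorem3} and~\ref{theorem4}) is easily seen to preserve this invariant; for instance, in Theorem~\ref{theorem3} one has $\alpha'(a)|_{x=2,y=0} = 1\cdot 1 + 1\cdot 1\cdot(-1)+1\cdot 1 = 1$ and $\beta'(a)|_{x=2,y=0} = 1\cdot 1 = 1$. Two consequences follow. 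First, every branching coefficient $\beta(a)$, $\alpha(a)$, or $\alpha(a)(x-1)$ specializes to $1$ at $x=2, y=0$, so $P_l|_{x=2,y=0}=1$ for every leaf. Second, each surviving-vertex factor $\alpha_l(v)(y-1)+\beta_l(v)$ specializes to $-1+1=0$. Hence, evaluating the displayed expansion at $x=2,y=0$ shows that $\varepsilon(G)$ equals the number of leaves $l$ with $V_l=\emptyset$; in particular $\ell \ge \varepsilon(G) \ge \tfrac12\varepsilon(G)$.

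For the $\gamma$-bound I keep $y$ formal and work only at $x=2$. Assuming $G$ is simple, no operation ever creates a loop (pivots cannot, and the reductions only rewrite weights), so Theorem~\ref{theorem1}(a) is never invoked; moreover Corollary~\ref{cor1} and both reductions decrease the vertex count by only one per application while keeping the partner vertex, so every leaf reached from a nonempty graph has $V_l\ne\emptyset$. A companion invariant, verified in the same style, is that the linear-in-$y$ coefficients $a_{v,1}$ of $\alpha(v)|_{x=2}$ and $b_{v,1}$ of $\beta(v)|_{x=2}$ satisfy $b_{v,1}-a_{v,1}\in\{-1,0,1\}$ for every surviving $v$: Corollary~\ref{cor1} exchanges $a_1$ and $b_1$ (flipping the sign of the difference), while Theorems~\ref{theorem3} and~\ref{theorem4} force the new difference to be $-1$ and $+1$ respectively, regardless of the prior values. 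Combined with the constant-term invariant, this implies that each surviving-vertex factor at $x=2$ is divisible by $y$ with $y$-coefficient at most $2$. Hence in $q_N(G^u)$ a leaf with $\ge 2$ surviving vertices contributes $0$ to the coefficient of $y$, and a one-vertex leaf contributes at most $P_l|_{x=2,y=0}\cdot 2 = 2$, yielding $\gamma(G)\le 2\ell$.

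The expected obstacle is the algebraic bookkeeping behind the second invariant: one has to expand the new $a_1'$ and $b_1'$ in terms of the old $a_1,b_1$ and the weights of the absorbed partner vertex for each of the four update rules, and observe the cancellations that make the final difference either independent of those inputs (for the reductions) or simply swapped with a sign change (for Corollary~\ref{cor1}). Once this routine calculation is in hand, both bounds follow by direct inspection of the evaluated expansion.
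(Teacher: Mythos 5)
Your core machinery is sound, and your route is genuinely different from the paper's. The paper argues by induction over the computation tree: for any branching rule the first two branches satisfy $q_N(H^u)=q_N(H_1^u)+q_N(H_2^u)$, the reductions preserve $\varepsilon$ (via the same weight-specialization observation as your first invariant), and for the $\gamma$ bound the paper invokes Corollary 4.17 of \cite{EMS} and restricts the induction to connected nodes. Your argument instead expands $q$ globally over the leaves and tracks the accumulated weights through two invariants; this is self-contained (no appeal to \cite{EMS}) and needs no connected/disconnected case split. I checked your invariant computations: at $x=2$, Theorem \ref{theorem3} forces the new difference $b_1'-a_1'=-1$, Theorem \ref{theorem4} forces $+1$, and Corollary \ref{cor1} transfers the partner's difference with a sign flip, so each surviving-vertex factor at a leaf is $\bigl(1+(b_1-a_1)\bigr)y+O(y^2)$ with $y$-coefficient in $\{0,1,2\}$, exactly as you claim.

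There is, however, a genuine gap: you restrict to ``four update rules,'' omitting part (b) of Theorem \ref{theorem1} (the three-term pivot formula), which the theorem statement explicitly allows -- the paper speaks of mixed binary-ternary computation trees just before Theorem \ref{theorem5}, and its own proof explicitly covers part (b) by taking $H_1,H_2$ to be the \emph{first two} of the three branch nodes. Once ternary branchings are permitted, two of your assertions fail. First, the third branching coefficient $\alpha(a)\alpha(b)(x-1)^2-\beta(a)\beta(b)$ specializes to $0$, not $1$, at $x=2,y=0$, so ``$P_l|_{x=2,y=0}=1$ for every leaf'' is false; your count then shows only that $\varepsilon(G)$ equals the number of empty leaves not lying below a third branch, which still yields $\ell\ge\varepsilon(G)$, so this is easily patched. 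Second, and more seriously for the $\gamma$ bound, the third branch deletes two vertices at once, so ``every leaf reached from a nonempty graph has $V_l\ne\emptyset$'' is false (apply Theorem \ref{theorem1}(b) to $K_2$: the third branch is the empty graph), and such empty leaves contribute the $y$-coefficient of $P_l|_{x=2}$ to $\gamma$, which your two invariants do not bound as stated. The patch is within reach of your method: under your invariants the third coefficient at $x=2$ has zero constant term and $y$-coefficient $-\bigl[(b_{a,1}-a_{a,1})+(b_{b,1}-a_{b,1})\bigr]\in[-2,2]$, so a leaf lying below exactly one third branch contributes at most $2$ to $\gamma$ and a leaf below two or more, or with a surviving vertex, contributes $0$; with that additional case analysis your bound $\gamma(G)\le 2\ell$ goes through. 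As written, though, your proof covers only the purely binary computations.
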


\begin{proof}
We quickly review some elementary properties of $q_{N}(G^{u})$ from \cite{A1,
A2, A, EMS}. This polynomial is described recursively as follows. If $G$ has a
loop at $a$ then $q_{N}(G^{u})$ = $q_{N}((G^{u})^{a}-a)$ + $q_{N}(G^{u}-a)$,
if $a$ and $b$ are loopless neighbors then $q_{N}(G^{u})$ = $q_{N}(G^{u}-a)$ +
$q_{N}((G^{u})^{ab}-b)$, and if $E_{n}$ is the edgeless $n $-vertex graph then
$q_{N}(E_{n}^{u})=y^{n}$. The latter includes the empty graph $E_{0}$ with
$q_{N}(E_{0}^{u})=1$. It follows by induction on the number of vertices that
no graph has any negative coefficient in $q_{N}(G^{u})$, and hence every graph
has $\gamma(G)\geq0$ and $\varepsilon(G)\geq0$. Moreover, every nonempty
simple graph has $\varepsilon(G)=0$ and every disconnected simple graph has
$\gamma(G)=0$.

Theorem \ref{theorem5} is certainly true for $E_{n}$, as $\frac{1}{2}%
\gamma(E_{n}),\frac{1}{2}\varepsilon(E_{n})\leq1$.

Proceeding inductively, observe that if Corollary \ref{cor1} or part (a) or
(b) of Theorem \ref{theorem1} is applied to a graph $H$ represented by a
certain node of the computation tree, and $H_{1}$ and $H_{2}$ are the graphs
represented by the (first two) resulting branch nodes, then $q_{N}(H^{u})$ =
$q_{N}(H_{1}^{u})$ + $q_{N}(H_{2}^{u})$; certainly then $\frac{1}%
{2}\varepsilon(H)$ = $\frac{1}{2}\varepsilon(H_{1})$ + $\frac{1}{2}%
\varepsilon(H_{2})$. (By the way, we call vertices of the computation tree
\textit{nodes} in order to distinguish them from vertices of $G$.) If Theorem
\ref{theorem3} or \ref{theorem4} is applied to remove a vertex $b$ from a
graph $H $ then $\frac{1}{2}q_{N}(H^{u})(0)$ = $\frac{1}{2}q_{N}(H^{u}-b)(0)$,
because the formulas of Theorems \ref{theorem3} and \ref{theorem4} yield
$\alpha^{\prime}(a)=\beta^{\prime}(a)=1$ when $\alpha(a)$ = $\beta(a)$ =
$\alpha(b)$ = $\beta(b)$ = $1$, $x=2$ and $y=0$.

It remains to consider the special case involving simple graphs. We actually
prove a slightly different result, namely: if $G$ is simple then the portion
of the computation tree involving only nodes corresponding to connected graphs
has no fewer than $\frac{1}{2}\gamma(G)$ leaves. If $G$ is disconnected then
$\gamma(G)=0$ so $G$ satisfies the result trivially. If $G $ is a connected,
simple graph with $n\leq2$ then $G$ satisfies the result because $\frac{1}%
{2}\gamma(G)\leq1$. Proceeding inductively, observe that if Corollary
\ref{cor1} or Theorem \ref{theorem1} (b) is applied to a connected graph $H$
represented by a certain node of the computation tree, and $H_{1}$ and $H_{2}$
are the graphs represented by the (first two) resulting branch nodes, then
$q_{N}(H^{u})$ = $q_{N}(H_{1}^{u})$ + $q_{N}(H_{2}^{u})$. It follows that
$\frac{1}{2}\gamma(H)$ = $\frac{1}{2}\gamma(H_{1})$ + $\frac{1}{2}\gamma
(H_{2})$. If Theorem \ref{theorem3} or Theorem \ref{theorem4} is used to
remove a vertex $b$ from a connected, simple graph $H$ with 3 or more vertices
then $\gamma(H)=\gamma(H-b)$, by Corollary 4.17 of \cite{EMS}.
\end{proof}

\bigskip

Theorem \ref{theorem5} is of limited value because the computations discussed
are not optimal. The restriction that part (c) of Theorem \ref{theorem1} is
only applied to loopless graphs is an obvious inefficiency. In addition, if
some (combinations of) weights are 0 then it would be natural to simply ignore
the corresponding parts of the computation. There are also other useful twin
reductions that do not fall under Theorem \ref{theorem5}.

\begin{definition}
Two vertices $a,b$ of $G$ are\emph{\ fraternal twins }if (i) either they are
looped and nonadjacent or they are unlooped and adjacent, and (ii) they have
the same neighbors outside $\{a,b\}$.
\end{definition}

Unlooped fraternal twins are called \textit{true twins} in \cite{EMS}, but we
prefer the present terminology because the rows and columns of the adjacency
matrix corresponding to fraternal twins are not quite the same. Here is an
extension of Proposition 4.15 of \cite{EMS} to weighted graphs.

\begin{theorem}
\label{theorem6}Suppose $a$ and $b$ are fraternal twins in $G$. Let
$G^{\prime}$ be the graph obtained from $G-b$ by changing the weights of $a$:
$\alpha^{\prime}(a)$ = $\alpha(a)\beta(b)+\beta(a)\alpha(b)$ and
$\beta^{\prime}(a)$ = $\beta(a)\beta(b)+\alpha(a)\alpha(b)(x-1)^{2}$. Then
$q(G)=q(G^{\prime})$.
\end{theorem}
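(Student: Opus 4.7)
The plan is to mirror the approach used for Theorem \ref{theorem3}. For each $T\subseteq V(G)-\{a,b\}$, I would group the four subsets $T,\,T\cup\{a\},\,T\cup\{b\},\,T\cup\{a,b\}$ of $V(G)$ into the two pairs $\{T,\,T\cup\{a,b\}\}$ and $\{T\cup\{a\},\,T\cup\{b\}\}$, corresponding respectively to the subsets $T$ and $T\cup\{a\}$ of $V(G')$. I then show that in each pair the two contributions to $q(G)$ combine, under the adjusted weights of $a$ in $G'$, into the single corresponding contribution to $q(G')$.

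The key linear-algebra input is the identity
\[
r(G[\{a,b\}\cup T]) = 2 + r(G[T]),\qquad n(G[\{a,b\}\cup T]) = n(G[T]),
\]
valid in both fraternal-twin cases. To verify it I would perform $GF(2)$ row and column operations on the adjacency matrix of $G[\{a,b\}\cup T]$: the $\{a,b\}\times\{a,b\}$ block is $\bigl(\begin{smallmatrix}0&1\\1&0\end{smallmatrix}\bigr)$ in the unlooped-adjacent case and $\bigl(\begin{smallmatrix}1&0\\0&1\end{smallmatrix}\bigr)$ in the looped-nonadjacent case, while rows $a$ and $b$ agree on the $T$-columns. In both cases, adding row $a$ to row $b$ makes row $b$ equal to $(1,1,\mathbf{0}^{T})$; a matching column operation and a cleanup using the new row $b$ and column $b$ then isolate a rank-$2$ block from the $T$-block, yielding total rank $2+r(G[T])$. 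I also need the easier fact that $r(G[T\cup\{a\}])=r(G[T\cup\{b\}])=r(G'[T\cup\{a\}])$, and likewise for nullities: this is immediate because $a$ and $b$ share loop status and neighbors in $T$, so the induced subgraphs $G[T\cup\{a\}]$ and $G[T\cup\{b\}]$ have identical adjacency matrices up to relabeling, and $G'$ agrees with $G$ off $b$.

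Granting these facts the bookkeeping is routine. Writing $P_{T}=\prod_{t\in T}\alpha(t)\prod_{v\notin T\cup\{a,b\}}\beta(v)$, the combined contribution of the pair $\{T,\,T\cup\{a,b\}\}$ to $q(G)$ is
\[
\bigl(\beta(a)\beta(b)+\alpha(a)\alpha(b)(x-1)^{2}\bigr)\,P_{T}\,(x-1)^{r(G[T])}(y-1)^{n(G[T])},
\]
which equals the contribution of $T$ to $q(G')$ precisely when $\beta'(a)=\beta(a)\beta(b)+\alpha(a)\alpha(b)(x-1)^{2}$. Similarly, the pair $\{T\cup\{a\},\,T\cup\{b\}\}$ contributes $(\alpha(a)\beta(b)+\beta(a)\alpha(b))$ times the common power product, matching the $(T\cup\{a\})$-contribution to $q(G')$ under $\alpha'(a)=\alpha(a)\beta(b)+\beta(a)\alpha(b)$. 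Summing over $T$ gives $q(G)=q(G')$.

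The main obstacle is the rank identity for $G[\{a,b\}\cup T]$: the identical-twin argument of Theorem \ref{theorem3} exploits two \emph{coinciding} rows to leave the rank unchanged, whereas here the rows of $a$ and $b$ disagree in exactly the $\{a,b\}$-columns, and it is this disagreement that produces the additional $(x-1)^{2}$ factor appearing in $\beta'(a)$. Once this two-case row/column reduction is dispatched, the remainder of the argument is essentially the proof of Theorem \ref{theorem3} with the factor $(x-1)^{2}$ inserted in the right place.
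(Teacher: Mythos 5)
Your proposal is correct and follows essentially the same route as the paper: the paper's proof also pairs the subsets $\{T,\,T\cup\{a,b\}\}$ and $\{T\cup\{a\},\,T\cup\{b\}\}$ with the subsets $T$ and $T\cup\{a\}$ of $V(G')$, asserting $r(G[T\cup\{a,b\}])=r(G[T])+2$ and $r(G[T\cup\{a\}])=r(G[T\cup\{b\}])=r(G'[T\cup\{a\}])$ and leaving the coefficient bookkeeping implicit. Your explicit $GF(2)$ row/column reduction supplies a verification of the rank-shift identity that the paper states without proof.
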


\begin{proof}
If $a\in S\subseteq V(G^{\prime})$ then $r(G^{\prime}[S])$ = $r(G[S])$\ =
$r(G[(S\cup\{b\})-\{a\}])$, because the\ adjacency matrices are the same. If
$a\not \in S\subseteq V(G^{\prime})$ then $r(G^{\prime}[S])$ = $r(G[S])$ =
$r(G[S\cup\{a,b\}])-2$.
\end{proof}

Theorem \ref{theorem6} has the following inductive generalization. Suppose
$k\geq1$ and $a=b_{0},b_{1},...,b_{k}$ are fraternal twins in $G$. If
$G^{\prime}$ is the graph obtained from $G-b_{1}-...-b_{k}$ by changing the
weights of $a$ to
\begin{align*}
\alpha^{\prime}(a)  &  =\sum_{\substack{S\subseteq\{b_{0},...,b_{k}\}
\\\left|  S\right|  ~\mathrm{odd}}}(%
%TCIMACRO{\dprod \limits_{b_{i}\in S}}%
%BeginExpansion
{\displaystyle\prod\limits_{b_{i}\in S}}
%EndExpansion
\alpha(b_{i}))(%
%TCIMACRO{\dprod \limits_{b_{j}\not \in S}}%
%BeginExpansion
{\displaystyle\prod\limits_{b_{j}\not \in S}}
%EndExpansion
\beta(b_{j}))(x-1)^{\left|  S\right|  -1}\\
& \\
\mathrm{and}~\beta^{\prime}(a)  &  =\sum_{\substack{S\subseteq\{b_{0}%
,...,b_{k}\} \\\left|  S\right|  ~\mathrm{even}}}(%
%TCIMACRO{\dprod \limits_{b_{i}\in S}}%
%BeginExpansion
{\displaystyle\prod\limits_{b_{i}\in S}}
%EndExpansion
\alpha(b_{i}))(%
%TCIMACRO{\dprod \limits_{b_{j}\not \in S}}%
%BeginExpansion
{\displaystyle\prod\limits_{b_{j}\not \in S}}
%EndExpansion
\beta(b_{j}))(x-1)^{\left|  S\right|  },
\end{align*}
\noindent then $q(G)=q(G^{\prime})$. We refer to the process of
combining\ several fraternal twins into a single vertex as a \textit{fraternal
twin reduction} no matter how many\ fraternal twins are combined.

\bigskip

In general, fraternal twin reductions are just as useful as identical twin
reductions. However, as noted in Proposition 38 of \cite{A2} and Corollary
4.16 of \cite{EMS} they have the effect of multiplying the unweighted
vertex-nullity polynomial by powers of 2. (Observe that if $\alpha(a)$ =
$\beta(a)$ = $\alpha(b)$ = $\beta(b)$ = $1$ and $x=2$ then Theorem
\ref{theorem6} gives $\alpha^{\prime}(a)$ = $\beta^{\prime}(a)$ =
$2$.)\ Consequently the lower bounds of Theorem \ref{theorem5} are not valid
for computations that utilize Theorem \ref{theorem6} along with Corollary
\ref{cor1} and Theorems \ref{theorem1}, \ref{theorem3} and \ref{theorem4}.

\bigskip

If we are given a reduction of a graph $G$ to disconnected vertices using twin
reductions and unlooped pendant vertex reductions, then Theorems
\ref{theorem3}, \ref{theorem4} and \ref{theorem6} describe $q(G)$ in linear
time -- simply update the vertex weights at each step, and at the end refer to
part (c) of Theorem \ref{theorem1}. If we are not given such a reduction then
determining whether or not any such reduction exists, and finding one if
possible, can be accomplished in polynomial time: as in Corollary 5.3 of
\cite{EMS}, simply search $V(G)$ repeatedly for unlooped degree-one vertices
and pairs of vertices $a,b$ with the same neighbors outside $\{a,b\}$. Hence
if $G$ has such a reduction then in polynomial time, Theorems \ref{theorem3},
\ref{theorem4} and \ref{theorem6} provide a description of $q(G)$ that
completely avoids the branching formulas of Corollary \ref{cor1} and parts (a)
and (b) of Theorem \ref{theorem1}. This observation extends Theorem 6.4 of
\cite{EMS} from $q_{N}$ to $q$ and from simple graphs that can be analyzed
without fraternal twin reductions to looped graphs that can be analyzed using
all three types of reductions:

\begin{theorem}
If a graph $G$ can be reduced to a collection of disconnected vertices using
unlooped pendant vertex reductions and the two types of twin reductions then
Theorems \ref{theorem3}, \ref{theorem4} and \ref{theorem6} provide a
polynomial-time description of $q(G)$.
\end{theorem}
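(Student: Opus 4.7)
The plan is to exhibit an explicit polynomial-time algorithm that succeeds whenever $G$ admits a reduction of the prescribed form, and to verify the two required facts: correctness (the algorithm finds a reduction sequence when one exists) and efficiency (each step and the total count are polynomial).

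First I would describe the algorithm. Maintain a current weighted graph $H$, initialized to $G$. At each iteration, scan $V(H)$ for (i) an unlooped vertex of degree one, (ii) a pair of identical twins, or (iii) a pair of fraternal twins. Each test requires only comparison of neighborhoods and loop status, so a full scan costs $O(|V(H)|^{2})$ time. When such a configuration is found, apply the corresponding reduction from Theorem \ref{theorem4}, \ref{theorem3}, or \ref{theorem6} to produce a new graph $H'$ with $|V(H')|=|V(H)|-1$ and $q(H)=q(H')$; the weight update involves only a constant number of arithmetic operations on polynomials of bounded degree in $x$ and $y$. Replace $H$ by $H'$ and repeat. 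The process halts after at most $|V(G)|$ iterations, either because no non-loop edges remain, in which case part (c) of Theorem \ref{theorem1} supplies $q(H)$ as a product over $V(H)$, or because no reducible configuration is found.

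The correctness claim is that if the input $G$ is reducible then the algorithm never halts in the second way; equivalently, the class of graphs reducible to disconnected vertices by our three operations is closed under each of those operations. This follows from the recursive characterization of the class: a graph belongs to it precisely when it can be built from isolated (possibly looped) vertices by successively adjoining an unlooped pendant, an identical twin, or a fraternal twin. Since the three reductions invert these construction steps, removing any such vertex from a reducible graph leaves a smaller reducible graph. Hence every reducible $H$ that still possesses a non-loop edge admits at least one of the three searched-for configurations, and the greedy scan succeeds.

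The main obstacle is establishing this closure property in the present loop- and weight-enabled setting. Corollary 5.3 of \cite{EMS} supplies the corresponding fact for $q_N$ and simple graphs, and its proof adapts with only bookkeeping changes: the definitions of identical twin, fraternal twin, and unlooped pendant used here are loop-aware and symmetric in the required sense, and vertex weights are irrelevant to the purely adjacency-based search step. Assembling the pieces, the algorithm returns $q(G)$ as an algebraic expression of polynomial size in polynomial time, completing the proof.
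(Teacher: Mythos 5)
Your algorithm is the one the paper itself uses: repeatedly scan for an unlooped degree-one vertex or a pair of vertices with the same neighbors outside $\{a,b\}$, apply the corresponding reduction from Theorem \ref{theorem3}, \ref{theorem4} or \ref{theorem6}, and finish with part (c) of Theorem \ref{theorem1}; like the paper, you ultimately rest the correctness of this greedy procedure on Corollary 5.3 of \cite{EMS}. So in outline the two proofs agree.

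There is, however, a genuine flaw in the one place where you try to do better than a citation. Your closure argument --- ``a graph belongs to the class precisely when it can be built by successively adjoining pendants and twins; since the three reductions invert these construction steps, removing any such vertex from a reducible graph leaves a smaller reducible graph'' --- is a non sequitur. The recursive characterization only guarantees that removing the \emph{last} vertex added in some witnessing construction sequence leaves a reducible graph. The vertex your greedy scan happens to select may occur at an arbitrary earlier stage of every witnessing sequence, and nothing you have said shows that such a sequence can be reordered to put that vertex last. That reordering claim --- equivalently, that the reducible class is closed under deletion of an \emph{arbitrary} pendant or twin, which for simple graphs follows from the fact that distance-hereditary graphs form a hereditary class --- is exactly the nontrivial content, and it is what the appeal to Corollary 5.3 of \cite{EMS} supplies; your first argument adds nothing to it and, read on its own, asserts an implication that does not hold. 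Since the paper also delegates this point to \cite{EMS}, your proof reaches the paper's standard once that circular paragraph is removed. One smaller inaccuracy: the weights produced by the reductions are \emph{not} polynomials of bounded degree (degrees grow with the number of reduction steps), which is precisely why the paper claims only a polynomial-time \emph{description} of $q(G)$, counting ring operations rather than their cost; your efficiency claim should be phrased the same way.
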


The theorem refers to a \textit{description} rather than a
\textit{computation} because we have ignored the cost of arithmetic operations
in the ring $R$. In case $R=\mathbb{Q}$ arithmetic operations have low cost,
and describing a weighted \textquotedblleft polynomial\textquotedblright%
\ $q(G)$ is the same as computing an evaluation of the unweighted polynomial
$q(G)$. The full unweighted polynomial may be recovered from several
evaluations by interpolation, so the theorem provides a polynomial-time
computation of the unweighted polynomial. In more complicated rings like
$\mathbb{Z}[x,y,\alpha_{1},..,\alpha_{n},\beta_{1},...,\beta_{n}]$ arithmetic
operations may be so expensive as to prohibit polynomial-time computation of
entire weighted polynomials. A thorough discussion of these matters is given
by Courcelle \cite{Ci}.

\section{Composition}

In this section we reformulate and extend some results of Arratia,
Bollob\'{a}s and Sorkin \cite{A2} regarding substituted graphs, using the
following version of a construction introduced by Cunningham \cite{Cu}.

\begin{definition}
\label{comp}A vertex-weighted graph $G$ is the \emph{composition} of
vertex-weighted graphs $H$ and $K$, $G=H\ast K$, if the following conditions hold.

(a) $V(H)\cap V(K)$ consists of a single unlooped vertex $a$.

(b) The vertex $a$ is unweighted in both $H$ and $K$, i.e., $\alpha
(a)=1=\beta(a)$ in $H$ and $K$.

(c) $V(G)=(V(H)\cup V(K))-a$, and the vertices of $G$ inherit their weights
from $H$ and $K$.

(d) $E(G)=E(H)\cup E(K)\cup\{vw|va\in E(G)$ and $aw\in E(H)\}$.
\end{definition}

Requiring $a$ to be unlooped and unweighted in both $H$ and $K$ guarantees
that no information is lost when we remove $a$ in constructing $G$.%

%TCIMACRO{\FRAME{fhFU}{4.5956in}{1.3915in}{0pt}{\Qcb{Composition of graphs.}%
%}{\Qlb{weighif}}{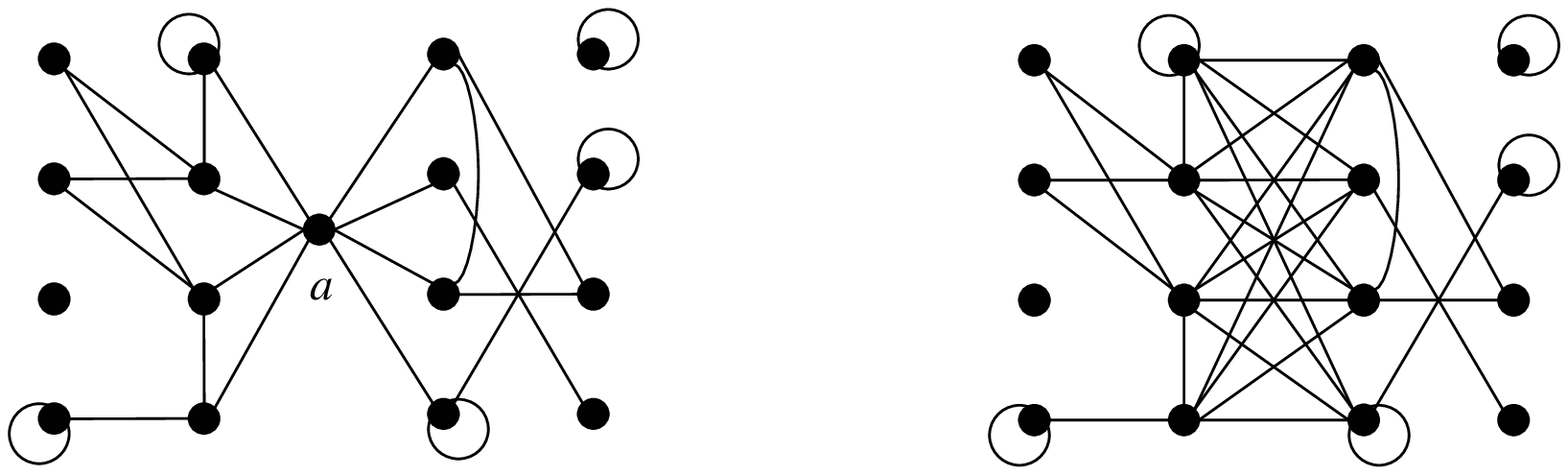}{\special{ language "Scientific Word";
%type "GRAPHIC";  maintain-aspect-ratio TRUE;  display "USEDEF";
%valid_file "F";  width 4.5956in;  height 1.3915in;  depth 0pt;
%original-width 8.246in;  original-height 10.6969in;  cropleft "0.1390";
%croptop "0.9192";  cropright "0.9304";  cropbottom "0.7371";
%filename 'weighif.ps';file-properties "XNPEU";}}}%
%BeginExpansion
\begin{figure}
[h]
\begin{center}
\includegraphics[
trim=1.146194in 7.884686in 0.573922in 0.864309in,
height=1.3915in,
width=4.5956in
]%
{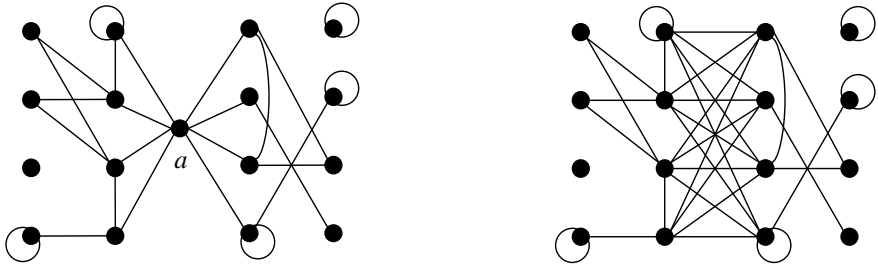}%
\caption{Composition of graphs.}%
\label{weighif}%
\end{center}
\end{figure}
%EndExpansion

Definition \ref{comp} includes several other familiar notions. If $a$ is
isolated in $H$ or $K$ then $H\ast K$ is the disjoint union of $H-a$ and $K-a
$. If $a$ is adjacent to every other vertex of $H$ and $K$ then $H\ast K$ is
also denoted $(H-a)+(K-a)$. This is traditionally called a \textquotedblleft
join\textquotedblright\ but that term has recently been used for general
compositions \cite{Cj}. If $a$ is adjacent to every other vertex of $H$ then
$H\ast K$ is the graph obtained by substituting $H-a$ for $a$ in $K$. If $a$
is adjacent to every other vertex of $H$ and $H-a$ is edgeless or complete
then the (un)looped vertices of $H-a$ are twins in $H\ast K$.

\bigskip

The following observation will be useful.

\begin{proposition}
\label{prop2} Suppose two graphs $\Gamma_{1}$ and $\Gamma_{2}$ are identical
except for the weights of a single vertex $a$, and let $\Gamma$ be the graph
that is identical to both $\Gamma_{1}$ and $\Gamma_{2}$ except for
$\alpha_{\Gamma}(a)=\alpha_{\Gamma_{1}}(a)+\alpha_{\Gamma_{2}}(a)$ and
$\beta_{\Gamma}(a)=\beta_{\Gamma_{1}}(a)+\beta_{\Gamma_{2}}(a)$. Then
$q(\Gamma_{1})+q(\Gamma_{2})=q(\Gamma)$.
\end{proposition}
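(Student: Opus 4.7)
The plan is to prove Proposition \ref{prop2} directly from Definition \ref{2varq} by exploiting the fact that the weight of any single vertex enters each summand of $q$ linearly. Since $\Gamma_1$, $\Gamma_2$, and $\Gamma$ share the same underlying graph and the same weights on every vertex other than $a$, all the rank, nullity, and non-$a$-weight factors appearing in the summands of $q(\Gamma_1)$, $q(\Gamma_2)$, and $q(\Gamma)$ agree term-by-term; only the factor associated with $a$ itself differs.

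Concretely, I would split the defining sum over $S\subseteq V(\Gamma)$ into two parts according to whether $a\in S$ or $a\notin S$. If $a\in S$ then the summand for $S$ has the form $\alpha(a)\cdot C_S$ where $C_S$ gathers all factors that do not involve the weights of $a$, namely $\prod_{s\in S\setminus\{a\}}\alpha(s)\cdot\prod_{v\notin S}\beta(v)\cdot(x-1)^{r(\Gamma[S])}(y-1)^{n(\Gamma[S])}$. This $C_S$ is identical in $\Gamma_1$, $\Gamma_2$, and $\Gamma$. Hence summing the two contributions gives $(\alpha_{\Gamma_1}(a)+\alpha_{\Gamma_2}(a))\,C_S = \alpha_\Gamma(a)\,C_S$. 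Symmetrically, if $a\notin S$ the summand is $\beta(a)\cdot C_S'$ and the corresponding contributions add as $(\beta_{\Gamma_1}(a)+\beta_{\Gamma_2}(a))\,C_S' = \beta_\Gamma(a)\,C_S'$. Summing both cases over all $S$ yields $q(\Gamma_1)+q(\Gamma_2)=q(\Gamma)$.

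There is essentially no obstacle here: the statement is a restatement of the linearity of $q$ in each of $\alpha(a)$ and $\beta(a)$ separately, which is built into Definition \ref{2varq}. One could alternatively derive it as a trivial consequence of Proposition \ref{prop1} applied twice (once with $r_1=1$, $r_2=\alpha_{\Gamma_2}(a)-\alpha_{\Gamma_1}(a)$ or similar), but the direct definition-chasing route above is shorter and makes the structure of the argument transparent. The whole proof should occupy only a few lines in the final write-up.
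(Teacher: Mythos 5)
Your proof is correct and matches the paper's (implicit) reasoning: the paper states Proposition \ref{prop2} as an observation with no written proof, precisely because it follows immediately from Definition \ref{2varq} by the linearity of each summand in the weights of $a$, which is exactly the definition-chasing argument you give.
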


Let $a$ be an unweighted vertex of a simple vertex-weighted graph $H$, and let
$H_{1}$ and $H_{2}$ be the full subgraphs of $H-a$ induced by the neighbors
(resp. non-neighbors) of $a$. Any weighted interlace polynomial $q(H\ast K)$
may be analyzed in the following way.

\bigskip

Step 1. Eliminate all edges between vertices of $H-a$ using pivots and
weight-changes as in\ Corollary \ref{cor1}. (In general there will be many
different sequences of pivots that may be used; this lack of uniqueness is not
important in the analysis.) The assignments of individual vertices of $H$ to
$H_{1}$ and $H_{2}$ may change during this process, and vertex weights may
also change; but these reassignments and weight changes will be the same for
all graphs $K$. As in the proof of Proposition 39 of \cite{A2} these pivots
will not affect the internal structure of $K$, because no two vertices of $K$
have distinct, nonempty sets of neighbors in $H$. Here the phrase ``internal
structure''\ refers to vertex weights, the positions of loops, and
adjacencies, including adjacencies between $a$ and other vertices of $K$.

\bigskip

Step 2. When Step 1 is complete, $q(H\ast K)$ is expressed as a sum in which
each summand is the product of an initial multiplying factor and the weighted
interlace polynomial $q(H^{\prime}\ast K)$ of a graph in which every edge is
incident on a vertex of $K-a$. If $v\in V(H_{2}^{\prime})$ then $v$ is
isolated, so the only effect of $v$ is to multiply that summand by
$q(\{v\})=\alpha(v)(y-1)+\beta(v)$. This same effect is realized by removing
$v$ and incorporating $\alpha(v)(y-1)+\beta(v)$ into the initial multiplying
factor, so we may assume that every summand has $H_{2}^{\prime}=\emptyset$. In
a summand with $\left\vert V(H_{1}^{\prime})\right\vert >1 $ the vertices of
$H_{1}^{\prime}$ are all identical twins, and may be consolidated into a
single vertex $a$ using Theorem \ref{theorem3}. In a summand with $\left\vert
V(H_{1}^{\prime})\right\vert =0$ a single vertex $a\in V(H_{1}^{\prime})$ may
be introduced with $\alpha(a)=0$ and $\beta(a)=1$; this will not affect the
value of the corresponding summand. As in Step 1, these manipulations are the
same for all $K$.

\bigskip

Step 3. The weighted interlace polynomial $q(H\ast K)$ is now expressed as a
sum in which each summand is the product of an initial multiplying factor and
a weighted interlace polynomial $q(K^{\prime})$ in which $K^{\prime}$ differs
from $K$ only in the weights of $a$. In each summand we multiply the $\alpha$
and $\beta$ weights of $a$ by the initial multiplying factor. This has the
effect of multiplying $q(K^{\prime})$ by that factor, so the summand is now
simply $q(K^{\prime})$. Proposition \ref{prop2} tells us that the sum is equal
to a single weighted interlace polynomial $q(K^{\prime})$, where $\alpha(a)$
and $\beta(a)$ are obtained by adding together the $\alpha$ and $\beta$
weights of $a$ in the various summands.

\bigskip

We deduce that there are weights $\alpha(a)$ and $\beta(a)$ that depend only
on $H$ and $a$, and have the following property: In every instance of
Definition \ref{comp} involving $H$, the interlace polynomial $q(H\ast K)$
equals $q(K^{\prime})$, where $K^{\prime}$ is obtained from \thinspace$K$ by
using $\alpha(a)$ and $\beta(a)$ as weights for $a$. Finding explicit formulas
for these weights is not difficult.

\begin{theorem}
\label{theorem8}Let $H$ be a vertex-weighted simple graph with an unweighted
vertex $a$. Then every composition $H\ast K$ has $q(H\ast K)=q(K^{\prime})$,
where $K^{\prime}$ is obtained from \thinspace$K$ by using the following
weights for $a$.
\begin{align*}
\alpha(a)  &  =\frac{q(H)-yq(H-a)}{(x-1)^{2}-(y-1)^{2}}\\
&  ~\\
\beta(a)  &  =\frac{((x-1)^{2}+y-1)q(H-a)-(y-1)q(H)}{(x-1)^{2}-(y-1)^{2}}%
\end{align*}

\end{theorem}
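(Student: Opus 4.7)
The plan is to piggyback on the three-step analysis that immediately precedes the theorem statement, which has already shown that for each $H$ there exist weights $\alpha(a),\beta(a)$ (depending only on $H$ and $a$) such that $q(H\ast K)=q(K')$ for every admissible $K$. Because those weights are unique functions of $H$ and $a$, I only need to pin them down by plugging in two well-chosen test compositions $K$ and solving the resulting linear system.

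First I would take $K$ to be the one-vertex graph consisting of $a$ alone. By Definition \ref{comp} this forces $H\ast K=H-a$, and $K'$ is just the single unlooped vertex $a$ carrying the unknown weights, so $q(K')=\alpha(a)(y-1)+\beta(a)$. This gives the first equation
\[
(y-1)\alpha(a)+\beta(a)=q(H-a).
\]
Next I would take $K$ to be the two-vertex graph consisting of an edge $ab$, where $b$ is an unlooped vertex carrying auxiliary weights $\alpha(b),\beta(b)$ treated as independent indeterminates. Then Definition \ref{comp} produces $H\ast K$ from $H$ by relabeling $a$ as $b$ and assigning $b$ the weights $(\alpha(b),\beta(b))$; since $q$ is linear in each vertex's weight pair (by Definition \ref{2varq}) and $a$ was unweighted in $H$, one gets
\[
q(H\ast K)=\alpha(b)\bigl[q(H)-q(H-a)\bigr]+\beta(b)q(H-a).
\]
Meanwhile $K'$ is a weighted edge on two unlooped vertices, and expanding $q(K')$ over the four subsets of $\{a,b\}$ yields
\[
q(K')=\beta(a)\beta(b)+\alpha(a)\beta(b)(y-1)+\beta(a)\alpha(b)(y-1)+\alpha(a)\alpha(b)(x-1)^{2}.
\]
Matching the coefficient of $\beta(b)$ recovers the first equation, while matching the coefficient of $\alpha(b)$ supplies the second,
\[
(x-1)^{2}\alpha(a)+(y-1)\beta(a)=q(H)-q(H-a).
\]

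Solving this $2\times 2$ system by Cramer's rule, with determinant $(y-1)^{2}-(x-1)^{2}$, produces exactly the formulas stated in the theorem. The only delicate point is the division by $(x-1)^{2}-(y-1)^{2}$: strictly speaking the computation takes place in the field of fractions, but the three-step analysis already guarantees that legitimate weights exist in the original ring, so the quotient expressions represent well-defined ring elements. The main organisational obstacle is simply the bookkeeping in the second test case, namely the four-term expansion of $q(K')$ and the justification that $\alpha(b),\beta(b)$ may be treated as independent parameters without perturbing the internal structure of $H$; both are routine once one observes that the three-step existence proof is independent of the choice of $K$.
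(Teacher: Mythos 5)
Your proposal is correct and takes essentially the same approach as the paper: both rely on the preceding three-step analysis for the existence of weights depending only on $H$ and $a$, then pin them down by substituting the one-vertex composition $V(K)=\{a\}$ and a two-vertex edge composition, and solving the resulting $2\times 2$ linear system. The only cosmetic difference is that the paper takes the second vertex unweighted, yielding the single equation $q(H)=((x-1)^{2}+y-1)\alpha(a)+y\beta(a)$, whereas you give $b$ indeterminate weights and match coefficients of $\alpha(b)$ and $\beta(b)$ --- an equivalent system with the same solution.
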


\begin{proof}
With $V(K)=\{a\}$, we have $q(H-a)$ = $q(H\ast K)$ = $q(K^{\prime})$ =
$\beta(a)+(y-1)\alpha(a)$. With $K$ consisting of two adjacent, unlooped,
unweighted vertices $a$ and $v$ we have $q(H)$ = $q(H\ast K)$ = $q(K^{\prime
})$\ = $((x-1)^{2}+y-1)\alpha(a)+y\beta(a)$. The stated formulas for
$\alpha(a)$ and $\beta(a)$ follow.
\end{proof}

\bigskip

In case $(x-1)^{2}-(y-1)^{2}$ might be a zero divisor in the ring $R$, one can
avoid any difficulty with the formulas of Theorem \ref{theorem8} by first
evaluating them in the polynomial ring $\mathbb{Z}[x,y,\alpha_{1}%
,..,\alpha_{n},\beta_{1},...,\beta_{n}]$, and then evaluating the resulting
division-free formulas in $R$.

\bigskip

Theorem \ref{theorem8} concerns compositions $H\ast K$ in which $H$ is simple.
If $H$ has looped vertices, a similar analysis requires two new steps.

\bigskip

Step 0. Begin by removing all loops in $H-a$ using local complementation as in
part (a) of Theorem \ref{theorem1}. The result is a description of $q(H\ast
K)$ as a sum in which each summand is the product of an initial factor and an
interlace polynomial $q(H^{\prime}\ast K)$ or $q(H^{\prime}\ast K^{a})$, with
no loops in $H^{\prime}$.

\bigskip

Step 4. After applying steps 1 - 3 to each of these summands, collect terms to
obtain a formula
\[
q(H\ast K)=q(K^{\prime})+q((K^{a})^{\prime})\text{.}%
\]
In order to distinguish the two terms on the right-hand side we denote by
$a_{c}$ the copy of $a$ in $K^{a}$. It might seem that we now have to
determine four unknowns, namely the vertex weights $\alpha(a)$, $\alpha
(a_{c})$, $\beta(a)$ and $\beta(a_{c})$ It turns out though that these
unknowns are not independent. There is an obvious isomorphism between
$(K^{\prime})^{a}$ and $K^{a}$, and consequently
\[
q((K^{\prime})^{a})-\beta(a)q((K^{\prime})^{a}-a)
\]
may be obtained from
\[
q((K^{a})^{\prime})-\beta(a_{c})q((K^{a})^{\prime}-a_{c})
\]
simply by replacing $\alpha(a_{c})$ with $\alpha(a)$. Theorem \ref{theorem2}
tells us that
\[
q(K^{\prime})-\beta(a)q(K^{\prime}-a)=q((K^{\prime})^{a})-\beta(a)q((K^{\prime
})^{a}-a),
\]
so
\[
q(K^{\prime})-\beta(a)q(K^{\prime}-a)
\]
may be obtained from
\[
q((K^{a})^{\prime})-\beta(a_{c})q((K^{a})^{\prime}-a_{c})
\]
by replacing $\alpha(a_{c})$ with $\alpha(a)$. That is, the coefficient of
$\alpha(a)$ in $q(K^{\prime})$ is precisely the same as the coefficient of
$\alpha(a_{c})$ in $q((K^{a})^{\prime})$.\ It follows that the sum
$q(K^{\prime})$ + $q((K^{a})^{\prime})$ is unchanged if we replace $\alpha(a)$
by $\alpha(a)$ + $\alpha(a_{c})$ and replace $\alpha(a_{c})$ by $0$.

\begin{theorem}
\label{theorem9}Let $H$ be a vertex-weighted graph with an unweighted,
unlooped vertex $a$. Then $H$ and $a$ determine weights $\alpha(a)$,
$\beta(a)$ and $\beta(a_{c})$ such that every composition $H\ast K$ has
$q(H\ast K)=q(K^{\prime})+q((K^{a})^{\prime})$, where $K^{\prime}$ is obtained
from \thinspace$K$ by using $\alpha(a)$ and $\beta(a)$ as weights for $a$ and
$(K^{a})^{\prime}$ is obtained from \thinspace$K^{a}$ by using $\alpha
(a_{c})=0$ and $\beta(a_{c})$ as weights for $a_{c}$, the copy of $a$ in
$(K^{a})^{\prime}$.
\end{theorem}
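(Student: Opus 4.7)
The plan is to follow the four-step procedure laid out in the paragraphs immediately preceding the theorem. First, since $H$ may have loops at vertices of $V(H)-a$, I would repeatedly apply part (a) of Theorem~\ref{theorem1} to each looped vertex $v\in V(H)-a$, writing
\[
q(H\ast K) = \beta(v)q((H-v)\ast K) + \alpha(v)(x-1)q((H\ast K)^{v}-v).
\]
The key observation is that, as a graph on $(V(H)\cup V(K))-\{a,v\}$, the second term $(H\ast K)^{v}-v$ is isomorphic to $H_{1}\ast K$ if $v$ is not adjacent to $a$ in $H$, and to $H_{1}\ast K^{a}$ if $v$ is adjacent to $a$ in $H$, for a suitable graph $H_{1}$ on $V(H)-v$ in which $a$ is unweighted and unlooped. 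Iterating eliminates all loops from the $H$-side, yielding an expression for $q(H\ast K)$ as a sum of terms of the form $c\cdot q(H'\ast K)$ and $d\cdot q(H'\ast K^{a})$, where each $H'$ is a simple graph with an unweighted, unlooped vertex $a$.

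Next, I would apply Theorem~\ref{theorem8} to each summand. Since each $H'$ is simple, this replaces $q(H'\ast K)$ by $q(\widetilde{K})$ and $q(H'\ast K^{a})$ by $q(\widetilde{K^{a}})$, where the graphs on the right differ from $K$ and $K^{a}$ only in the weights of $a$ (respectively of the copy $a_{c}$), and those weights are the explicit expressions specified by Theorem~\ref{theorem8}. I would then use Proposition~\ref{prop2} to consolidate: all summands whose underlying graph is $K$ merge into a single $q(K')$ whose $a$-weights are the sums of the individual $a$-weights, and likewise all summands whose underlying graph is $K^{a}$ merge into a single $q((K^{a})')$. This yields $q(H\ast K)=q(K')+q((K^{a})')$ but with, a priori, four weight parameters $\alpha(a),\beta(a),\alpha(a_{c}),\beta(a_{c})$.

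The final step is to show that the parameter $\alpha(a_{c})$ can be replaced by $0$ without altering the sum. For this I would invoke the observation that $(K')^{a}$ and $K^{a}$ share the same underlying graph, together with Theorem~\ref{theorem2}(a) applied to $K'$, which gives
\[
q(K') - \beta(a)q(K'-a) = q((K')^{a}) - \beta(a)q((K')^{a}-a).
\]
Tracking this identity through the natural identification of $(K')^{a}$ with the underlying graph of $(K^{a})'$, the coefficient of $\alpha(a)$ in $q(K')$ must equal the coefficient of $\alpha(a_{c})$ in $q((K^{a})')$; hence the entire $\alpha(a_{c})$ contribution can be absorbed into $\alpha(a)$ without changing $q(K')+q((K^{a})')$. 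The main obstacle is precisely this last step: carefully verifying, via the Theorem~\ref{theorem2}(a) relation, that the four apparent parameters genuinely collapse to three and that the absorption into $\alpha(a)$ is consistent across all possible $K$.
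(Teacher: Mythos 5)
Your proposal is correct and follows essentially the same route as the paper: loop removal via Theorem \ref{theorem1}(a) yielding summands $q(H'\ast K)$ and $q(H'\ast K^{a})$ (the paper's Step 0), reduction of each loopless summand to a re-weighting of $K$ or $K^{a}$ (the paper re-runs its Steps 1--3, while you invoke Theorem \ref{theorem8}, which is equivalent), consolidation via Proposition \ref{prop2}, and finally the Theorem \ref{theorem2}(a) argument identifying the coefficient of $\alpha(a)$ in $q(K')$ with that of $\alpha(a_{c})$ in $q((K^{a})')$ so that $\alpha(a_{c})$ may be replaced by $0$. The step you flag as the ``main obstacle'' is precisely the paper's Step 4, and your sketch of it matches the paper's argument.
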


Formulas for the three weights mentioned in\ Theorem \ref{theorem9} may be
derived from three instances of the theorem. We use $H-a$ and $H$ as in
Theorem \ref{theorem8}, and also $H^{\ell}$, the graph obtained from $H$ by
attaching a loop at $a$. These correspond respectively to compositions of $H$
with graphs $K_{1},K_{2},K_{3}$ such that $V(K_{1})=\{a\}$; $V(K_{2})=\{a,v\}
$ with $v$ an unweighted, unlooped neighbor of $a$; and $V(K_{3})=\{a,\ell\} $
with $\ell$ an unweighted, looped neighbor of $a$. Definition \ref{2varq}
gives the following values.
\[
q(H-a)=q(K_{1}^{\prime})+q((K_{1}^{a})^{\prime})=(y-1)\alpha(a)+\beta
(a)+\beta(a_{c})
\]

\[
q(H)=q(K_{2}^{\prime})+q((K_{2}^{a})^{\prime})=((x-1)^{2}+y-1)\alpha
(a)+y\beta(a)+x\beta(a_{c})
\]

\[
q(H^{\ell})=q(K_{3}^{\prime})+q((K_{3}^{a})^{\prime})=((x-1)^{2}%
+y-1)\alpha(a)+x\beta(a)+y\beta(a_{c})
\]
We deduce these formulas.
\[
\alpha(a)=\frac{(x+y)q(H-a)-q(H)-q(H^{\ell})}{(x+y)(y-1)-2((x-1)^{2}+y-1)}%
\]

\[
\beta(a)-\beta(a_{c})=\frac{q(H)-q(H^{\ell})}{y-x}%
\]

\[
\beta(a)+\beta(a_{c})=\frac{2((x-1)^{2}+y-1)q(H-a)-(y-1)\left(  q(H)+q(H^{\ell
})\right)  }{2((x-1)^{2}+y-1)-(y-1)(x+y)}%
\]
Separate formulas for $\beta(a)$ and $\beta(a_{c})$ are derived in the obvious
ways by adding and subtracting the last two. As before, possible problems with
denominators may be avoided by first evaluating the formulas in $\mathbb{Z}%
[x,y,\alpha_{1},..,\alpha_{n},\beta_{1},...,\beta_{n}]$.

\bigskip

We close this section with different formulas for the weights $\alpha(a)$,
$\beta(a)$ and $\beta(a_{c})$ that appear in Theorems \ref{theorem8} and
\ref{theorem9}. Suppose $H$ has an unweighted, unlooped vertex $a$. Let $N(a)
$ denote the open neighborhood of $a$, i.e., the set containing the vertices
$v\neq a\in V(H)$ that neighbor $a$. Given a\ subset $S\subseteq V(H-a)$ let
$\rho=\rho_{S,a}$ (resp. $\kappa=\kappa_{S,a}$) be the row (resp. column)
vector with entries indexed by $\{i:v_{i}\in S\}$ whose $i^{th}$ entry is 1 or
0 according to whether $v_{i}\in N(A)$ or $v_{i}\not \in N(a)$. Also let
$M=M_{S}$ be the adjacency matrix of $H[S]$. Note that $r(M)\leq r
\begin{pmatrix}
M & \kappa
\end{pmatrix}
\leq r(M)+1$ and
\[
r(M)\leq r\left(
\begin{array}
[c]{cc}%
M & \kappa\\
\rho & 0
\end{array}
\right)  ,\;\;r\left(
\begin{array}
[c]{cc}%
M & \kappa\\
\rho & 1
\end{array}
\right)  \leq r(M)+2
\]
because adjoining a single row or column to a matrix raises the rank by 0 or 1.

\begin{definition}
\label{def9}The \emph{type} of $S$ with respect to $a$ is defined as follows.

\bigskip

\noindent$S$ is of type 1 if $r(M)=r\left(
\begin{array}
[c]{cc}%
M & \kappa\\
\rho & 0
\end{array}
\right)  =r\left(
\begin{array}
[c]{cc}%
M & \kappa\\
\rho & 1
\end{array}
\right)  -1.$

\bigskip

\noindent$S$ is of type 2 if $r(M)+2=r\left(
\begin{array}
[c]{cc}%
M & \kappa\\
\rho & 0
\end{array}
\right)  =r\left(
\begin{array}
[c]{cc}%
M & \kappa\\
\rho & 1
\end{array}
\right)  .$

\bigskip

\noindent$S$ is of type 3 if $r(M)=r\left(
\begin{array}
[c]{cc}%
M & \kappa\\
\rho & 1
\end{array}
\right)  =r\left(
\begin{array}
[c]{cc}%
M & \kappa\\
\rho & 0
\end{array}
\right)  -1.$
\end{definition}

\begin{lemma}
Every $S\subseteq V(H-a)$ is of one of these types. Moreover, if $a$ has no
looped neighbor then there is no $S\subseteq V(H-a)$ of type 3.
\end{lemma}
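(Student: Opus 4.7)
The plan is a rank-count case analysis driven by whether $\kappa$ lies in the column span of $M$. Writing $r_0:=r(M)$, $r_1:=r(M\mid\kappa)$, and $r_2,r_3$ for the two ranks appearing in Definition \ref{def9}, the inequalities already recorded in the excerpt give $r_0\leq r_1\leq r_0+1$ and $r_1\leq r_2,r_3\leq r_1+1$, so the only freedom to classify is where each of $r_1,r_2,r_3$ sits in those length-one intervals.

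Case A: $\kappa\in\mathrm{colspan}(M)$, so $r_1=r_0$. Pick $v$ with $Mv=\kappa$; by symmetry of $M$ we get $v^TM=\rho$, so $(\rho,v^T\kappa)=v^T(M\mid\kappa)$ already lies in the row span of $(M\mid\kappa)$. Set $\lambda:=v^T\kappa=v^TMv\in GF(2)$. I would first check that $\lambda$ is independent of the choice of $v$ by observing that any $w\in\ker M$ satisfies $w^T\kappa=(Mw)^Tv=0$, so different solutions agree on $v^T\kappa$. Then exactly one of $(\rho,0),(\rho,1)$ lies in the row span of $(M\mid\kappa)$, giving $\{r_2,r_3\}=\{r_0,r_0+1\}$, with $\lambda=0$ yielding type 1 and $\lambda=1$ yielding type 3. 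Case B: $\kappa\notin\mathrm{colspan}(M)$, so $r_1=r_0+1$. By symmetry $\rho\notin\mathrm{rowspan}(M)$, so no linear combination of rows of $(M\mid\kappa)$ reproduces the first $|S|$ entries $\rho$ at all, hence neither $(\rho,0)$ nor $(\rho,1)$ sits in the row span. Therefore $r_2=r_3=r_1+1=r_0+2$, which is type 2. This exhausts all possibilities and proves the first assertion.

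For the second assertion, type 3 requires Case A together with $\lambda=1$. Over $GF(2)$ the cross terms of $v^TMv=\sum_{i,j}v_iM_{ij}v_j$ cancel in pairs by symmetry and $v_i^2=v_i$, so
\[
\lambda=\sum_i v_iM_{ii}=|\mathrm{supp}(v)\cap L|\bmod 2,
\]
where $L\subseteq S$ is the set of looped vertices. Dually, $\lambda=v^T\kappa=|\mathrm{supp}(v)\cap N(a)|\bmod 2$. The plan is to combine these two parity expressions with the defining relation $Mv=\kappa$ (which says that the $GF(2)$ symmetric-difference neighborhood of $\mathrm{supp}(v)$ in $H[S]$ equals $N(a)\cap S$) under the hypothesis $L\cap N(a)=\emptyset$, so that the diagonal contributions collapse and $\lambda=0$.

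The main obstacle I anticipate is exactly this last step: reconciling the two distinct formulas for $\lambda$ via the constraint imposed by $Mv=\kappa$. Concretely, I would try a double-count of the 1's in the $L\times\mathrm{supp}(v)$ submatrix of $M$, summing the rows indexed by $L$ on one hand (each such row of $M$ dotted with $v$ equals the corresponding entry of $\kappa$, which is $0$ by the no-looped-neighbor hypothesis) and reading off column sums on the other hand (which relate to $|\mathrm{supp}(v)\cap L|$ through the diagonal entries $M_{\ell\ell}=1$ for $\ell\in L$). Making this bookkeeping yield $\lambda=0$ in all of Case A is the technical heart of the proof, and may require adjusting $v$ within $\ker M$ so that the looped-vertex contributions to $v^TMv$ can be seen to cancel.
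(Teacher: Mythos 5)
Your treatment of the first assertion is correct and complete, and it is genuinely more self-contained than the paper's: the paper disposes of the trichotomy by citing Lemma 2 of \cite{BBCS}, whereas your two-case analysis (whether or not $\kappa$ lies in the column space of $M$), the well-definedness of $\lambda=v^{T}\kappa$ modulo $\ker M$, and the observation that exactly one of $(\rho,0)$, $(\rho,1)$ lies in the row space of $(M\mid\kappa)$ constitute a full proof, together with a clean dictionary: types 1 and 3 are the cases $\kappa=Mv$ with $\lambda=0$ and $\lambda=1$ respectively, and type 2 is the case $\kappa$ outside the column space of $M$.

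The gap you flag in the second assertion, however, is not a bookkeeping obstacle that a cleverer double count will overcome: the implication you are trying to prove is false, so it cannot be closed. Let $H$ be the path $a-b-c$ with a loop at $c$, and take $S=\{b,c\}$, so that
\[
M=\begin{pmatrix}0&1\\1&1\end{pmatrix},\qquad\kappa=\begin{pmatrix}1\\0\end{pmatrix}.
\]
Here $M$ is invertible over $GF(2)$, so $v=(1,1)^{T}$ is the unique solution of $Mv=\kappa$ and no adjustment within $\ker M=0$ is available. Then $\lambda=v^{T}\kappa=1$, so $S$ is of type 3 by your classification; directly, $r(M)=2$, the bordered matrix with corner entry $1$ still has rank $2$ because its last row is the sum of the first two, and the bordered matrix with corner entry $0$ has rank $3$. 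Yet the only neighbor of $a$ is $b$, which is unlooped, so $a$ has no looped neighbor and $L\cap N(a)=\emptyset$. Both of your parity formulas hold, with $|\mathrm{supp}(v)\cap L|=|\{c\}|=1$ and $|\mathrm{supp}(v)\cap N(a)|=|\{b\}|=1$; the hypothesis $L\cap N(a)=\emptyset$ simply does not force them to vanish. So the lemma as stated is false (and with it the ``in particular'' clause of the corollary that follows it, since this $H$ has $q_{3}(H-a)=(x-1)^{2}\neq 0$).

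You should also know that the paper's own proof stumbles at exactly the corresponding point: it asserts that the rows of the adjacency matrix of $H[T\cap N(a)]$ sum to $(1\cdots 1)$, which ignores the contributions of the rows of $M$ indexed by $T\setminus N(a)$ to the columns indexed by $T\cap N(a)$; in the example above $T=\{b,c\}$, $T\cap N(a)=\{b\}$, and the claimed row sum is $0$, not $1$. What that argument legitimately yields is obtained by summing all entries of the $T\times T$ submatrix, which is precisely your identity $\lambda\equiv|T\cap L|\pmod 2$: a type 3 set forces a looped vertex somewhere in $H-a$, not a looped neighbor of $a$. Consequently the statement becomes true, and your proof becomes complete in one line, once the hypothesis ``$a$ has no looped neighbor'' is replaced by ``$H-a$ has no looped vertex'': then $L=\emptyset$ for every $S$, hence $\lambda=0$ and type 3 is impossible.
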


\begin{proof}
The fact that every $S\subseteq V(H-a)$ is of type 1, 2 or 3 appears in Lemma
2 of \cite{BBCS}.

Suppose $S$ is of type 3, so
\[
r(M)+1=r\left(
\begin{array}
[c]{cc}%
M & \kappa\\
\rho & 0
\end{array}
\right)  .
\]
The row vector $%
\begin{pmatrix}
\rho & 0
\end{pmatrix}
$ cannot be a sum of rows of $%
\begin{pmatrix}
M & \kappa
\end{pmatrix}
$. For if it were then $\rho$ would be the corresponding sum of rows of $M$,
and by symmetry $\kappa$ would be the corresponding sum of columns of $M$.
Consequently it would follow that
\[
r(M)=r\left(
\begin{array}
[c]{cc}%
M & \kappa
\end{array}
\right)  =r\left(
\begin{array}
[c]{cc}%
M & \kappa\\
\rho & 0
\end{array}
\right)  .
\]
On the other hand, $\rho$ must be the sum of the rows of $M$ corresponding to
the elements of some subset $T\subseteq S$, for if it were not then it would
follow that
\[
r(M)+1=r\left(
\begin{array}
[c]{c}%
M\\
\rho
\end{array}
\right)  =r\left(
\begin{array}
[c]{cc}%
M & \kappa
\end{array}
\right)  =r\left(
\begin{array}
[c]{cc}%
M & \kappa\\
\rho & 0
\end{array}
\right)  -1.
\]
Every such $T$ must contain an odd number of neighbors of $a$, to avoid giving
a sum of rows of $%
\begin{pmatrix}
M & \kappa
\end{pmatrix}
$ equal to $%
\begin{pmatrix}
\rho & 0
\end{pmatrix}
$.

Choose such a $T$, and consider the induced subgraph $H[T\cap N(a)]$. As the
sum of the rows of $M$ corresponding to elements of $T$ is $\rho$, the sum of
the rows of the adjacency matrix of $H[T\cap N(a)]$ is the row vector
$(1...1)$. $\left\vert T\cap N(a)\right\vert $ is odd, so the adjacency matrix
of $H[T\cap N(a)]$\ has an odd number of entries equal to 1. The matrix is
symmetric, so at least one of these entries must appear on the diagonal. That
is, at least one vertex of $H[T\cap N(a)]$ is looped.
\end{proof}

\bigskip

For $S\subseteq V(H-a)$ let $M^{a}=M_{S}^{a}$ be the matrix obtained from $M$
by toggling every entry $m_{ij}$ that has $v_{i},v_{j}\in N(a)$. Then $r
\begin{pmatrix}
M & \kappa
\end{pmatrix}
$ = $r
\begin{pmatrix}
M^{a} & \kappa
\end{pmatrix}
$ because the first matrix is transformed into the second by adding the last
column to every column corresponding to a neighbor of $a$. Similarly, adding
the last column of the first matrix below to every column corresponding to a
neighbor of $a$ tells us that
\[
r\left(
\begin{array}
[c]{cc}%
M & \kappa\\
\rho & 1
\end{array}
\right)  =r\left(
\begin{array}
[c]{cc}%
M^{a} & \kappa\\
0 & 1
\end{array}
\right)  =1+r(M^{a}).
\]
Consequently, Definition \ref{def9} may be restated using the relationship
between $r(M)$ and $r(M^{a})$: if $S$ is of type 1 then $r(M)=r(M^{a})$, if
$S$ is of type 2 then $r(M)=r(M^{a})-1$, and if $S$ is of type 3 then
$r(M)=r(M^{a})+1$.

\begin{proposition}
\label{prop3}For $i\in\{1,2,3\}$ let
\[
q_{i}(H-a)=\sum_{\substack{S\subseteq V(H-a) \\\text{of type }i}}(%
%TCIMACRO{\dprod \limits_{s\in S}}%
%BeginExpansion
{\displaystyle\prod\limits_{s\in S}}
%EndExpansion
\alpha(s))(%
%TCIMACRO{\dprod \limits_{v\not \in S}}%
%BeginExpansion
{\displaystyle\prod\limits_{v\not \in S}}
%EndExpansion
\beta(v))(x-1)^{r((H-a)[S])}(y-1)^{n((H-a)[S])}.
\]
Then the following equations hold.
\begin{align*}
q(H-a)  &  =q_{1}(H-a)+q_{2}(H-a)+q_{3}(H-a)\\
&  \,\\
q(H)  &  =yq_{1}(H-a)+\left(  1+\frac{(x-1)^{2}}{y-1}\right)  q_{2}%
(H-a)+xq_{3}(H-a)\\
&  \,\\
q(H^{\ell})  &  =xq_{1}(H-a)+\left(  1+\frac{(x-1)^{2}}{y-1}\right)
q_{2}(H-a)+yq_{3}(H-a)
\end{align*}
Also,
\[
q(H^{a}-a)=q_{1}(H-a)+\left(  \frac{x-1}{y-1}\right)  q_{2}(H-a)+\left(
\frac{y-1}{x-1}\right)  q_{3}(H-a).
\]

\end{proposition}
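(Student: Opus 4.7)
The plan is to verify each of the four identities by splitting the defining sum according to whether $a\in S'$ and, when it is, according to the type of $S=S'\setminus\{a\}$ supplied by Definition \ref{def9}. The first identity is immediate: by the Lemma just before the proposition, every $S\subseteq V(H-a)$ is of exactly one type, so $q(H-a)=q_{1}(H-a)+q_{2}(H-a)+q_{3}(H-a)$ is just a regrouping of Definition \ref{2varq}.

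For $q(H)$, I would separate $\sum_{S'\subseteq V(H)}$ into the part with $a\notin S'$ and the part with $a\in S'$. Since $a$ is unweighted and unlooped in $H$, the former part equals $q(H-a)$. For the latter, write $S'=S\cup\{a\}$; after a harmless row-and-column permutation the adjacency matrix of $H[S']$ is
\[
\begin{pmatrix} M & \kappa \\ \rho & 0 \end{pmatrix},
\]
whose rank is $r(M)$, $r(M)+2$, or $r(M)+1$ according to whether $S$ is of type $1$, $2$, or $3$. Because $|S'|=|S|+1$, the corresponding nullities are $n(M)+1$, $n(M)-1$, and $n(M)$, so the three type-indexed contributions to this half of $q(H)$ are $(y-1)q_{1}(H-a)$, $\frac{(x-1)^{2}}{y-1}q_{2}(H-a)$, and $(x-1)q_{3}(H-a)$. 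Adding these to $q(H-a)=q_{1}+q_{2}+q_{3}$ and collecting coefficients yields the stated formula.

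The derivation for $q(H^{\ell})$ is identical, except that the adjacency matrix of $H^{\ell}[S\cup\{a\}]$ has bottom-right entry $1$ rather than $0$; by the other half of Definition \ref{def9} its rank is now $r(M)+1$, $r(M)+2$, or $r(M)$ for types $1$, $2$, $3$. The $(y-1)$ and $(x-1)$ factors attached to types $1$ and $3$ therefore swap, producing the claimed formula. For $q(H^{a}-a)$ I would use the rank identity recorded immediately before the proposition: $r(M^{a}_{S})$ equals $r(M_{S})$, $r(M_{S})+1$, or $r(M_{S})-1$ for types $1$, $2$, $3$ respectively. Substituting into Definition \ref{2varq} and grouping by type gives the last equation with factors $1$, $(x-1)/(y-1)$, and $(y-1)/(x-1)$ on $q_{1},q_{2},q_{3}$.

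The only real obstacle is careful bookkeeping of the exponents of $(x-1)$ and $(y-1)$ across the three type-indexed sums in each of the four identities; no conceptual difficulty arises once the block-matrix rank computations encoded in Definition \ref{def9} and the preceding remark are in hand.
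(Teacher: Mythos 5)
Your proposal is correct and follows essentially the same route as the paper: split the defining sum over subsets of $V(H)$ (resp.\ $V(H^{\ell})$) according to whether $a$ is included, identify the augmented adjacency matrices $\left(\begin{smallmatrix} M & \kappa\\ \rho & 0\end{smallmatrix}\right)$ and $\left(\begin{smallmatrix} M & \kappa\\ \rho & 1\end{smallmatrix}\right)$, read off the rank shifts for each type from Definition \ref{def9}, and use the $r(M)$ versus $r(M^{a})$ comparison preceding the proposition for the last identity. The paper's proof is just a terser statement of exactly this argument, so your more explicit bookkeeping of ranks and nullities is a faithful filling-in of its details.
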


\begin{proof}
The first equality is obvious. For the second, note that each $S\subseteq
V(H-a)$ gives rise to two subsets of $V(H)$, namely $S$ and $S\cup\{a\}$;
these correspond to the adjacency matrices $M$ and $\left(
\begin{array}
[c]{cc}%
M & \kappa\\
\rho & 0
\end{array}
\right)  $. Similarly, the third equality is derived by considering the
contributions of two adjacency matrices for each $S\subseteq V(H-a)$, namely
$M$ and $\left(
\begin{array}
[c]{cc}%
M & \kappa\\
\rho & 1
\end{array}
\right)  $. The last equality follows from the discussion preceding the proposition.
\end{proof}

\begin{corollary}
The weights mentioned in Theorems \ref{theorem8} and \ref{theorem9} are
$\beta(a)=q_{1}(H-a)$, $\alpha(a)=q_{2}(H-a)/(y-1)$ and $\beta(a_{c}%
)=q_{3}(H-a)$. In particular, $\beta(a_{c})=0$ if $a$ has no looped neighbor.
\end{corollary}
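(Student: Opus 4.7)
The plan is to verify that the claimed values $\beta(a)=q_1(H-a)$, $\alpha(a)=q_2(H-a)/(y-1)$, $\beta(a_c)=q_3(H-a)$ satisfy the defining linear systems from Theorems \ref{theorem8} and \ref{theorem9}. Since those systems uniquely determine the weights (at least after passing to the polynomial ring $\mathbb{Z}[x,y,\alpha_1,\ldots,\beta_n]$ so that the denominators are nonzero), exhibiting a solution is the same as computing the weights.

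First I would handle the situation of Theorem \ref{theorem9}, which subsumes Theorem \ref{theorem8}. Recall that $\alpha(a),\beta(a),\beta(a_c)$ are characterized by the three evaluations
\begin{align*}
q(H-a) &= (y-1)\alpha(a) + \beta(a) + \beta(a_c),\\
q(H) &= ((x-1)^2+y-1)\alpha(a) + y\beta(a) + x\beta(a_c),\\
q(H^\ell) &= ((x-1)^2+y-1)\alpha(a) + x\beta(a) + y\beta(a_c).
\end{align*}
I would simply substitute $\alpha(a)=q_2(H-a)/(y-1)$, $\beta(a)=q_1(H-a)$, $\beta(a_c)=q_3(H-a)$ into the right-hand sides and check, using the three identities of Proposition \ref{prop3}, that the left-hand sides come out correctly. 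Each verification is a one-line calculation; for instance the second reduces to $q_2(H-a) + \frac{(x-1)^2}{y-1}q_2(H-a) + yq_1(H-a) + xq_3(H-a)$, which is exactly the expression Proposition \ref{prop3} gives for $q(H)$.

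For Theorem \ref{theorem8} the graph $H$ is simple, so in particular $a$ has no looped neighbor, and the Lemma preceding Proposition \ref{prop3} guarantees that no $S\subseteq V(H-a)$ is of type 3, whence $q_3(H-a)=0$. The system of Theorem \ref{theorem9} then collapses to the two-equation system used to derive the formulas in Theorem \ref{theorem8}, and the same substitution works; alternatively one can just plug $q_1(H-a)$ and $q_2(H-a)/(y-1)$ directly into the explicit formulas from Theorem \ref{theorem8} and use the first two identities of Proposition \ref{prop3} to simplify. The final assertion $\beta(a_c)=0$ when $a$ has no looped neighbor is immediate from the same Lemma.

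The only subtlety, and the one I would flag, is invertibility of the defining linear systems: the denominators appearing in the weight formulas (such as $(x-1)^2-(y-1)^2$ and $(x+y)(y-1)-2((x-1)^2+y-1)$) may vanish in the ambient ring $R$. This is not a real obstacle, however, since the paragraph following Theorem \ref{theorem8} already indicates the standard workaround: carry out the verification in $\mathbb{Z}[x,y,\alpha_1,\ldots,\alpha_n,\beta_1,\ldots,\beta_n]$, where the weights are uniquely determined, and then specialize. The bulk of the work is thus just the three line-by-line checks against Proposition \ref{prop3}.
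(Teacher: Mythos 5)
Your proposal is correct and follows essentially the same route as the paper: the paper's proof also combines Proposition \ref{prop3} with the three evaluation identities coming from composing $H$ with $K_{1}$, $K_{2}$, $K_{3}$ (i.e., with the formulas of Theorem \ref{theorem8} and those displayed after Theorem \ref{theorem9}). The only cosmetic difference is direction: the paper substitutes Proposition \ref{prop3} into the already-solved formulas for the weights, while you substitute the claimed weights into the defining linear system and invoke uniqueness of its solution (justified, as you note, by working in the polynomial ring, where the determinant $(y-x)\left((y-1)(x+y)-2((x-1)^{2}+y-1)\right)$ is a nonzero element of an integral domain).
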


\begin{proof}
The corollary follows from Proposition \ref{prop3} and the formulas given
in\ Theorem \ref{theorem8} and immediately after Theorem \ref{theorem9}.
\end{proof}

\section{A characterization of simple graphs}

In this section we focus our attention on the unweighted vertex-nullity polynomial.

\begin{proposition}
\label{prop4}If $G$ is a connected, unweighted graph with at least one looped
vertex then $\varepsilon(G)=q_{N}(G)(0)>1$.
\end{proposition}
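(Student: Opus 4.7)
The plan is to proceed by strong induction on $|V(G)|$, using the recursion $q_{N}(G)=q_{N}(G-a)+q_{N}(G^{a}-a)$ available at any looped vertex $a$ (the unweighted form of Theorem \ref{theorem1}(a) recalled in the proof of Theorem \ref{theorem5}). Setting $y=0$ gives $\varepsilon(G)=\varepsilon(G-a)+\varepsilon(G^{a}-a)$. The base case $|V(G)|=1$ is immediate, since a single looped vertex has $q_{N}=2$.

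For the inductive step I will strengthen the claim: call a graph $H$ \emph{loop-filled} if every connected component of $H$ contains at least one loop, and aim to show that every loop-filled $H$ has $\varepsilon(H)\geq 2$. For connected $H$ this is the proposition itself; for disconnected $H$ it follows from the multiplicativity of $q_{N}$ under disjoint union (Theorem \ref{theorem2}(c)), since each component contributes a factor $\geq 2$ by the inductive hypothesis. With this strengthening in hand, what remains is the following key lemma: if $G$ is connected with a loop at $a$, then at least one of $G-a$ and $G^{a}-a$ is loop-filled. Granting the lemma, the loop-filled one contributes $\geq 2$ to $\varepsilon(G)$ while the other is $\geq 0$ (coefficients of the unweighted $q_{N}$ are non-negative, as noted in the proof of Theorem \ref{theorem5}), giving $\varepsilon(G)\geq 2>1$.

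To prove the key lemma, suppose $G-a$ is not loop-filled, so some component $K_{1}$ of $G-a$ is loopless. Connectivity of $G$ forces $V(K_{1})$ to contain a neighbour $u$ of $a$, and since $K_{1}$ is loopless, $u$ is unlooped in $G$. Let $D$ be any component of $G^{a}-a$. A short observation (noting that an edge $vw$ with $v\notin N(a)$ is untouched by local complementation at $a$) shows that if $V(D)\cap N(a)=\emptyset$ then $V(D)$ would have no path to $a$ in $G$, contradicting connectivity; hence $D$ contains some $v\in N(a)$. If $v$ is unlooped in $G$ then its loop is toggled on in $G^{a}-a$, so $D$ has a loop. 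Otherwise $v$ is looped in $G$, so $v\notin V(K_{1})$; thus $u$ and $v$ lie in distinct components of $G-a$, whence $uv\notin E(G)$. Since both $u,v\in N(a)$, local complementation at $a$ places the edge $uv$ into $G^{a}-a$, forcing $u\in D$; and $u$, being an unlooped-in-$G$ neighbour of $a$, carries a loop in $G^{a}-a$.

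The main obstacle is precisely Case~(ii) of the key lemma: one has no direct handle on whether the distinguished vertex $u\in V(K_{1})$ lies in the arbitrary component $D$. The decisive step is the chain of implications ``$v$ looped in $G\Rightarrow v\notin V(K_{1})\Rightarrow uv\notin E(G)\Rightarrow uv\in E(G^{a}-a)$'', which conscripts $u$ into $D$ exactly when no other loop in $D$ is available. Everything else reduces to bookkeeping about which vertices are looped in $G^{a}-a$ versus $G$.
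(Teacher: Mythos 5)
Your proof is correct, but its core argument is genuinely different from the paper's. Both proofs share the skeleton that the problem forces: induct on the number of vertices, expand at a looped vertex $a$ via $q_{N}(G)=q_{N}(G-a)+q_{N}(G^{a}-a)$, and combine non-negativity of the coefficients of $q_{N}$ with multiplicativity over disjoint components; and both ultimately rest on the same structural fact, that at least one of $G-a$, $G^{a}-a$ has a loop in every component. The difference is how that fact is established. The paper splits on whether $a$ is a cutpoint: if it is not, the paper uses $G-a$ when another loop exists, and otherwise uses $G^{a}-a$ (all neighbors of $a$ become looped); if $a$ is a cutpoint, the paper first argues that $G^{a}-a$ is connected (local complementation creates edges between neighbors of $a$ lying in different components of $G-a$) and then splits on whether some neighbor of $a$ is unlooped. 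You instead isolate the dichotomy as an explicit lemma about your ``loop-filled'' property and prove it uniformly: if $G-a$ has a loopless component $K_{1}$, you pick an unlooped neighbor $u\in V(K_{1})$ of $a$, note that every component $D$ of $G^{a}-a$ must contain some neighbor $v$ of $a$, and then either $v$ acquires a toggled loop or (when $v$ is looped in $G$, hence outside $K_{1}$) the created edge $uv$ drags the newly looped $u$ into $D$. This buys you a proof with no cutpoint analysis and no connectivity argument for $G^{a}-a$, at the cost of the less obvious conscription step; the paper's cases are individually more transparent but more numerous. One cosmetic repair: your strengthened claim should be stated for nonempty graphs, since the empty graph is vacuously loop-filled yet has $\varepsilon(E_{0})=1$; this never actually bites, because every graph to which you apply the inductive hypothesis has at least one vertex.
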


\begin{proof}
The proposition is certainly true if $G$ has $n\leq2$ vertices, as all three
such graphs have $\varepsilon(G)=2$. The argument proceeds by induction on
$n\geq3$. Recall that $\varepsilon(G)\geq0$ for every graph $G$, and let $a$
be a looped vertex of $G$. Then $q_{N}(G)=q_{N}(G-a)+q_{N}(G^{a}-a)$, so
$\varepsilon(G)\geq\max\{\varepsilon(G-a)$, $\varepsilon(G^{a}-a)\}$.

Suppose $a$ is not a cutpoint of $G$. If $G$ has some looped vertex other than
$a$ then the inductive hypothesis implies that $\varepsilon(G-a)>1$. If $G$
has no looped vertex other than $a$ then every neighbor of $a$ is looped in
$G^{a}-a$. Every component of $G^{a}-a$ contains at least one neighbor of $a$,
so the inductive hypothesis implies that $\varepsilon(C)>1$ for every
component $C$ of $G^{a}-a$. Hence $\varepsilon(G^{a}-a)=\prod_{C}%
\varepsilon(C)>1$.

Suppose now that $G$ has a looped cutpoint $a$. For each component $C$ of
$G-a$ and each vertex $v\in N(a)$ that lies in some other component of $G-a$,
no edge connecting $v$ to an element of $N(a)\cap V(C)$ appears in $G-a$.
Consequently every such edge appears in $G^{a}-a$, so all of $N(a)$ is
contained in a single component of $G^{a}-a$. As $G$ is connected, this
implies that $G^{a}-a$ is also connected. Hence if any neighbor of $a$ is
unlooped in $G$, $G^{a}-a$ is a connected graph with a looped vertex and the
inductive hypothesis implies that $\varepsilon(G^{a}-a)>1$. If instead every
neighbor of $a$ is looped in $G$, then every component $C$ of $G-a$ has a
looped vertex and the inductive hypothesis implies $\varepsilon(G-a)=\prod
_{C}\varepsilon(C)>1$.
\end{proof}

\begin{corollary}
\label{cor4}An unweighted graph $G$ has $\varepsilon(G)>0$ if and only if $G $
has no nonempty simple component.
\end{corollary}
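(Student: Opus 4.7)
The plan is to decompose $G$ into connected components, use the multiplicativity of $q_N$ (equivalently of $\varepsilon$) on disjoint unions, and then characterize $\varepsilon(C)$ for each component $C$ using Proposition \ref{prop4} for the looped case and a short induction for the simple case.

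First I would write $G$ as the disjoint union of its connected components $C_1,\ldots,C_k$. Since Theorem \ref{theorem2}(c) and its evident $q_N$-analogue give $q_N(G^u)=\prod_i q_N(C_i^u)$, we have $\varepsilon(G)=\prod_i \varepsilon(C_i)$. Because every component has $\varepsilon(C_i)\geq 0$ (as noted in the proof of Theorem \ref{theorem5}), $\varepsilon(G)>0$ if and only if $\varepsilon(C_i)>0$ for every $i$. So it suffices to show: for a connected component $C$, $\varepsilon(C)>0$ iff $C$ has at least one looped vertex.

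For the ``looped'' direction, if $C$ is connected and has a looped vertex, Proposition \ref{prop4} immediately gives $\varepsilon(C)>1>0$. For the ``simple'' direction I need to show that any nonempty simple (unlooped) graph $H$ satisfies $\varepsilon(H)=0$; this is the only step requiring actual work. I would prove it by induction on $|V(H)|$. The base case $|V(H)|=1$ is $q_N(E_1^u)=y$, giving $\varepsilon=0$. For the inductive step, split into two cases. If $H$ has an isolated vertex $v$, then $\varepsilon(H)=\varepsilon(\{v\})\cdot\varepsilon(H-v)=0$. If $H$ has no isolated vertex, pick any vertex $a$ and a neighbor $b$; both are loopless in the simple graph $H$, so the recursion $q_N(H^u)=q_N(H^u-a)+q_N((H^u)^{ab}-b)$ from the proof of Theorem \ref{theorem5} applies. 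Since pivoting does not create loops, both $H-a$ and $H^{ab}-b$ are nonempty simple graphs with fewer vertices, and the inductive hypothesis gives $\varepsilon(H-a)=\varepsilon(H^{ab}-b)=0$, hence $\varepsilon(H)=0$.

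Combining the two directions across components yields the corollary: $\varepsilon(G)=\prod_i\varepsilon(C_i)$ is positive exactly when every $C_i$ contains a loop, i.e.\ exactly when no component of $G$ is a nonempty simple graph. The main obstacle, such as it is, is the simple-component induction; everything else is bookkeeping, and Proposition \ref{prop4} does the heavy lifting for the looped case.
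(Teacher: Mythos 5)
Your proof is correct and takes essentially the same route as the paper: split $G$ into components, use multiplicativity so that $\varepsilon(G)=\prod_i \varepsilon(C_i)$, invoke Proposition \ref{prop4} for components containing a loop, and use the fact that nonempty simple graphs have $\varepsilon=0$. The only difference is that the paper simply cites this last fact (Remark 20 of \cite{A2}), whereas you prove it by a short induction on the pivot recursion; that induction is valid, since pivoting preserves simplicity and both $H-a$ and $H^{ab}-b$ remain nonempty.
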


\begin{proof}
As noted in Remark 20 of \cite{A2}, every nonempty simple graph has
$\varepsilon(G)=0$. It follows that every graph with a nonempty simple
component also has $\varepsilon(G)=0$. On the other hand, Proposition
\ref{prop4} and the fact that $\varepsilon(E_{0})=1$ together imply that every
graph with no nonempty simple component has $\varepsilon(G)>0$.
\end{proof}

\begin{corollary}
Let $G$ be an unweighted graph, and let $G^{c}$ be its \emph{complement},
i.e., the graph with $V(G^{c})=V(G)$ whose edges (including loops) are
precisely the edges absent in $G$. Then the following are equivalent: $G$ is
simple, $q_{N}(G+E_{1}^{u})=yq_{N}(G^{c})$ and $y|q_{N}(G+E_{1}^{u})$.
\end{corollary}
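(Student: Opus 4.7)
The plan is to prove the cyclic chain of implications $(\mathrm{i}) \Rightarrow (\mathrm{ii}) \Rightarrow (\mathrm{iii}) \Rightarrow (\mathrm{i})$. The implication $(\mathrm{ii}) \Rightarrow (\mathrm{iii})$ is immediate because $y \cdot q_N(G^c)$ is manifestly divisible by $y$.

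For $(\mathrm{iii}) \Rightarrow (\mathrm{i})$ I would observe that $y \mid q_N(G+E_1^u)$ is equivalent to $\varepsilon(G+E_1^u) = q_N(G+E_1^u)(0) = 0$, and invoke Corollary \ref{cor4}: since $G + E_1^u$ is connected (the new unlooped vertex $v$ is adjacent to every vertex of $G$) and nonempty, $\varepsilon(G+E_1^u)=0$ forces $G + E_1^u$ itself to be simple. Because $v$ is unlooped and loops at the other vertices are unchanged, this is equivalent to $G$ being simple.

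The bulk of the work lies in $(\mathrm{i}) \Rightarrow (\mathrm{ii})$. Assuming $G$ is simple, I would split the defining sum for $q_N(G+E_1^u)$ according to whether $v \in S$:
\[
q_N(G+E_1^u) = q_N(G) + \sum_{S \subseteq V(G)} (y-1)^{n((G+E_1^u)[S \cup \{v\}])}.
\]
Next, classify each $S \subseteq V(G)$ by its type with respect to $v$ in the sense of Definition \ref{def9}. Since $G$ is simple, $v$ has no looped neighbor, and the lemma following Definition \ref{def9} rules out type 3. Let $M$ denote the adjacency matrix of $G[S]$; since every vertex of $S$ is a neighbor of $v$, the matrix $M^v$ toggles every entry of $M$ and so equals the adjacency matrix of $G^c[S]$. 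Combining Definition \ref{def9} with the rank identities preceding Proposition \ref{prop3}, and writing $n^c(S) = n(G^c[S])$, one then verifies
\begin{align*}
\text{Type 1:} \quad & n(G[S]) = n^c(S), \qquad n((G+E_1^u)[S \cup \{v\}]) = n^c(S) + 1, \\
\text{Type 2:} \quad & n(G[S]) = n^c(S) + 1, \qquad n((G+E_1^u)[S \cup \{v\}]) = n^c(S).
\end{align*}
In either case the combined contribution of $S$ to $q_N(G+E_1^u)$ equals $(y-1)^{n^c(S)} + (y-1)^{n^c(S)+1} = y(y-1)^{n^c(S)}$, and summing over $S \subseteq V(G)$ produces $y \cdot q_N(G^c)$.

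The main obstacle is the bookkeeping around the type classification and the rank/nullity translations; once the nullities of $G[S]$, $G^c[S]$, and $(G+E_1^u)[S \cup \{v\}]$ are all expressed in terms of $n^c(S)$, the identity drops out in one line and the simplicity hypothesis enters solely through the exclusion of type 3.
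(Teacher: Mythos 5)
Your proposal is correct, but your proof of the main implication takes a genuinely different route from the paper's. The logical skeleton matches: (ii) $\Rightarrow$ (iii) is trivial, and the equivalence of (iii) with (i) rests on Corollary \ref{cor4} together with the connectedness of $G+E_{1}^{u}$ in both treatments (the paper phrases it contrapositively: if $G$ is not simple then $G+E_{1}^{u}$ is connected with a loop, so $\varepsilon(G+E_{1}^{u})>0$). For (i) $\Rightarrow$ (ii), however, the paper never touches the subset expansion of $q_{N}$. Instead it treats $H=G+E_{1}^{u}$ as a composition: taking $K$ to be the two-vertex graph on $a$ and a looped neighbor $\ell$, Theorem \ref{theorem8} gives $q(H\ast K)=x\beta(a)+((x-1)^{2}+y-1)\alpha(a)$, while the loop recursion gives $q(H\ast K)=q(G)+(x-1)q(G^{c})$; comparing the two and using $q(G)=\beta(a)+(y-1)\alpha(a)$ yields the two-variable identity $q(G^{c})=\beta(a)+(x-1)\alpha(a)$, and then setting $x=2$ in $q(H)=((x-1)^{2}+y-1)\alpha(a)+y\beta(a)$ produces $q_{N}(G+E_{1}^{u})=y(\alpha(a)+\beta(a))|_{x=2}=yq_{N}(G^{c})$. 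You work directly with the defining sum, pairing each $S\subseteq V(G)$ with $S\cup\{v\}$ and using Definition \ref{def9}, the Lemma excluding type 3, and the rank identities preceding Proposition \ref{prop3}; your nullity bookkeeping is right (type 1: $n(G[S])=n^{c}(S)$ and the bordered matrix has nullity $n^{c}(S)+1$; type 2: the two values swap), so each pair contributes $y(y-1)^{n^{c}(S)}$. In effect you have re-derived the $x=2$ specialization of Proposition \ref{prop3} with $q_{3}(H-a)=0$. Your argument is more self-contained (it bypasses Theorem \ref{theorem8} entirely) and makes transparent that simplicity enters only through the exclusion of type 3; the paper's argument is shorter given the composition machinery already in place, and yields along the way the stronger two-variable relation $q(G^{c})=\beta(a)+(x-1)\alpha(a)$, of which the corollary's identity is the $x=2$ shadow.
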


\begin{proof}
Suppose first that $G$ is simple, and let $H=G+E_{1}^{u}$ with $a$ the vertex
of $E_{1}^{u}$. Let $K$ be the two-vertex graph with $V(K)=\{a,\ell\} $ in
which $\ell$ is an unweighted, looped neighbor of $a$. Theorem \ref{theorem8}
tells us that
\[
q(H\ast K)=q(K^{\prime})=x\beta(a)+((x-1)^{2}+y-1)\alpha(a).
\]
On the other hand, the recursive description of $q$ \cite{A} tells us that
\[
q(H\ast K)=q((H\ast K)-\ell)+(x-1)q((H\ast K)^{\ell}-\ell)=q(G)+(x-1)q(G^{c}%
).
\]
Recalling that $q(G)=q(H-a)$ = $\beta(a)+(y-1)\alpha(a)$, we see that
\[
0=q(H\ast K)-q(H\ast K)=(x-1)\beta(a)+(x-1)^{2}\alpha(a)-(x-1)q(G^{c})
\]
and consequently $q(G^{c})=\beta(a)+(x-1)\alpha(a)$. Recalling that
$q(G+E_{1}^{u})$ = $q(H)$ = $((x-1)^{2}+y-1)\alpha(a)+y\beta(a)$, we see that
\[
q_{N}(G+E_{1}^{u})=q(G+E_{1}^{u})|_{x=2}=y\left(  \alpha(a)+\beta(a)\right)
|_{x=2}=yq(G^{c})|_{x=2}=yq_{N}(G^{c}).
\]

Now suppose that $G$ is not simple. Then $G$ has a looped vertex, and so does
$G+E_{1}^{u}$. As $G+E_{1}^{u}$ is connected, Corollary \ref{cor4} states that
$\varepsilon(G+E_{1}^{u})>0$; consequently $y\nmid q_{N}(G+E_{1}^{u})$.
\end{proof}

\section{Trees}

A combinatorial description of the interlace polynomials of trees and forests
is given in \cite{ACRT}. In order to motivate the next section we sketch this
description briefly here, omitting details and proofs. Recall that a tree $T$
is \textit{rooted} by specifying a root vertex $r\in V(T)$. Each non-root
vertex $v\in V(T)$ then has a unique \textit{parent} $p(v)$, a neighbor whose
distance from $r$ is less than the distance from $r$ to $v$. The elements of
$p^{-1}(\{p(v)\})$ are the \textit{children} of $p(v)$, and the children of
$p(v)$ other than $v$ itself are \textit{siblings} of $v$. An \textit{ordered}
tree is a rooted tree given with an order on the set of children of each
parent vertex; non-root vertices may then have \textit{earlier siblings} and
\textit{later siblings}. A set of vertices that contains no adjacent pair is
\textit{independent}, and a set of vertices \textit{dominates} a vertex $v$ if
it contains $v$ or contains some neighbor of $v$.

\begin{definition}
An \emph{earlier sibling cover} (or \emph{es-cover}) in an ordered tree $T$ is
an independent set $I$ that dominates $r$ and has the property that for every
non-root vertex $v\in I$, every earlier sibling of $v$ is dominated by $I$.
\end{definition}

\begin{definition}
For integers $s$ and $t$ the \emph{es-number} $c_{s,t}(T)$ is the number of
$s$-element es-covers in $T$ whose non-root elements have $t$ different parents.
\end{definition}

If $T^{\prime}$ is a subtree of $T$ and $r\in V(T^{\prime})$ then we presume
that the children of each parent vertex $v$ in $T^{\prime}$ are ordered by
restricting the order of the children of $v$ in $T$. With this convention, it
is easy to verify that the earlier sibling covers in large trees arise from
earlier sibling covers in subtrees.

\begin{lemma}
\label{recur} Let $T$ be an ordered tree with a leaf $\ell$ such that
$p(\ell)\neq r\neq\ell$, all the siblings of $\ell$ are leaves, and $\ell$ has
no later siblings. Then
\[
\{\text{es-covers }I\text{ in }T\text{ with }\ell\not \in
I\}=\{\text{es-covers in }T-\ell\}
\]
and
\begin{gather*}
\{\text{es-covers }I\text{ in }T\text{ with }\ell\in I\}\\
=\{\text{unions }p^{-1}(\{p(\ell)\})\cup I\text{ with }I\text{ an es-cover in
}T-p(\ell)-p^{-1}(\{p(\ell)\})\}.
\end{gather*}

\end{lemma}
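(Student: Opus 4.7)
The plan is to prove both set equalities by directly checking the three defining conditions of an es-cover --- independence, domination of $r$, and the earlier-sibling clause --- for each of the four inclusions involved. First I would record the structural consequences of the hypotheses: since $p(\ell)\neq r$, the root is not adjacent to $\ell$ and is distinct from $p(\ell)$; since every sibling of $\ell$ is a leaf and $\ell$ is the last sibling in order, removing $\ell$ alters no earlier-sibling relation anywhere in $T$; and since all children of $p(\ell)$ are leaves, the graph $T' = T - p(\ell) - p^{-1}(\{p(\ell)\})$ is still a rooted ordered tree containing $r$, with parents and sibling orders of all surviving vertices inherited unchanged from $T$.

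For the first equality, take an es-cover $I$ of $T$ with $\ell\notin I$. Independence is immediate in $T-\ell$; the vertex dominating $r$ is neither $\ell$ itself nor a vertex adjacent to $r$ via $\ell$; and the earlier-sibling clause for each non-root $v\in I$ refers only to siblings other than $\ell$ (since $\ell\notin I$), whose relative positions are unchanged by deleting $\ell$. The reverse inclusion runs identically because no vertex of $T-\ell$ acquires new siblings or new neighbors when $\ell$ is reinstated.

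For the second equality, the crux is the forward implication, which rests on a single observation: if $\ell\in I$ then independence forces $p(\ell)\notin I$, and applying the earlier-sibling clause to $\ell$ forces every earlier sibling of $\ell$ to be dominated by $I$; since each such sibling is a leaf whose only neighbor is $p(\ell)\notin I$, the sibling must itself lie in $I$. Thus $p^{-1}(\{p(\ell)\})\subseteq I$, and setting $J = I - p^{-1}(\{p(\ell)\})$ gives a subset of $V(T')$. Verifying that $J$ is an es-cover of $T'$ is routine: independence and the earlier-sibling condition at each $v\in J$ transfer from $T$ to $T'$ because parents and sibling orders agree, and $r$ is still dominated because the dominator of $r$ in $I$ cannot be $p(\ell)$ (excluded from $I$) nor a child of $p(\ell)$ (those leaves have no other neighbor than $p(\ell)$). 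The reverse inclusion --- that $J\cup p^{-1}(\{p(\ell)\})$ is an es-cover of $T$ whenever $J$ is an es-cover of $T'$ --- follows by the same case analysis, with the earlier-sibling condition at the vertices of $p^{-1}(\{p(\ell)\})$ being automatic because every earlier sibling of such a vertex also lies in $p^{-1}(\{p(\ell)\})$.

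The only subtle point --- and the single place where all three hypotheses on $\ell$ are actively used --- is ensuring that no earlier-sibling obligation crosses the boundary of the deleted set. The ``no later siblings'' assumption guarantees that deleting $\ell$ alone preserves the earlier-sibling order of every remaining vertex; the assumption that every sibling of $\ell$ is a leaf guarantees that $T'$ is a tree with no orphaned descendants to re-root; and $p(\ell)\neq r$ guarantees that $r$ survives in $T'$ with the same neighbors. Once these structural remarks are recorded, the proof reduces to essentially mechanical bookkeeping on the three defining conditions.
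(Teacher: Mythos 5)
Your overall strategy (verifying the three defining conditions of an es-cover for each of the four inclusions) is sound, and there is nothing in the paper to compare it against directly: Section 6 explicitly omits details and proofs, deferring to \cite{ACRT}, so your attempt must stand on its own. The first equality and the forward half of the second are handled correctly; in particular the key observation that independence excludes $p(\ell)$ from $I$ and the earlier-sibling clause at $\ell$ then forces every sibling of $\ell$ (a leaf whose only neighbor is the excluded $p(\ell)$) into $I$, and the survival argument for dominators (a dominating vertex can be neither $p(\ell)$, which is outside $I$, nor a child of $p(\ell)$, whose only neighbor is $p(\ell)$), are exactly the right mechanisms.

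The gap is in the reverse inclusion of the second equality. Let $J$ be an es-cover of $T'=T-p(\ell)-p^{-1}(\{p(\ell)\})$ and $I=J\cup p^{-1}(\{p(\ell)\})$. You check the earlier-sibling condition at the vertices of $p^{-1}(\{p(\ell)\})$ and assert that the rest ``follows by the same case analysis,'' while your closing paragraph claims the hypotheses guarantee that ``no earlier-sibling obligation crosses the boundary of the deleted set.'' That claim is false: the hypotheses constrain $\ell$ and its siblings, but say nothing about the siblings of $p(\ell)$, so $p(\ell)$ may well have a later sibling $v$ lying in $J$. The earlier-sibling condition at $v$ in $T$ then demands that $p(\ell)$ --- a vertex deleted from $T'$, about which the assumption on $J$ is silent --- be dominated by $I$. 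Concretely, take $r$ with children $c_{1},c_{2}$ in that order and let $\ell$ be the unique child of $c_{1}$; then $J=\{c_{2}\}$ is an es-cover of $T'$, and the obligation of $c_{2}$ toward its earlier sibling $c_{1}=p(\ell)$ is invisible in $T'$. The lemma is saved by a fact your proof never states: every child of $p(\ell)$, in particular $\ell$, lies in $I$ and neighbors $p(\ell)$, so $p(\ell)$ is automatically dominated by $I$. This one-line check must be added, and it replaces --- rather than instantiates --- the general principle you invoke. (Two smaller slips are harmless because your other arguments already cover them: reinstating $\ell$ does give its siblings a new sibling, but only a later one; and $r$ need not keep ``the same neighbors'' in $T'$, since it may lose $p(\ell)$, but the dominator-survival argument you give for $r$ does not rely on that.)
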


Lemma \ref{recur} is the key to an inductive proof that the terms in the
definition of $q(T)$ can be collected into sub-totals corresponding to earlier
sibling covers.

\begin{definition}
Let $T$ be an ordered tree with vertex weights, and let $I$ be an es-cover in
$T$. Let
\begin{align*}
I_{r}  &  =\{v\in I:\text{either v}=\text{r or I contains a later sibling of
v}\},\\
I_{l}  &  =\{v\in I:\text{v}\neq\text{r and I contains no later sibling of
v}\},\text{ and }\\
I_{c}^{\prime}  &  =\{v\not \in I:\text{I contains a child of v}\}
\end{align*}
For each vertex $v$ define the $I$\emph{-weight} $w_{I}(v)$ as follows:
\[
w_{I}(v)=
\begin{cases}
\beta(v)+\alpha(v)(y-1) & \text{if $v\in I_{r}$}\\
\alpha(v)\cdot\left(  (y-1)\beta(p(v))+(x-1)^{2}\alpha(p(v))\right)  &
\text{if $v\in I_{l}$}\\
1 & \text{if $v\in I_{c}^{\prime}$}\\
\beta(v) & \text{if }v\notin I\text{ and $v\notin I_{c}^{\prime}$.}%
\end{cases}
\]
The product
\[
\prod_{v\in V(T)}w_{I}(v)
\]
is the \emph{total weight} of $I$ in $T$, denoted $w_{T}(I)$.
\end{definition}

\begin{theorem}
\label{treepoly} If $T$ is an ordered tree with vertex weights then
\[
q(T)=\sum_{I~an~es-cover}w_{T}(I)\text{.}%
\]

\end{theorem}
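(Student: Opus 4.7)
The plan is to induct on $|V(T)|$. For the base case of a single vertex $T=\{r\}$, the unique es-cover is $\{r\}$, $r\in I_r$, and $w_T(\{r\})=\beta(r)+\alpha(r)(y-1)$, which matches $q(T)$ computed directly from Definition~\ref{2varq}.

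For the inductive step I would select a leaf $\ell$ as in Lemma~\ref{recur}: set $v=p(\ell)\neq r$, let $\ell_1,\ldots,\ell_{k-1}$ be the (necessarily leaf) earlier siblings of $\ell=\ell_k$, and let $T_0=T-v-\ell_1-\cdots-\ell_k$. The degenerate case in which every non-root vertex is a leaf attached to $r$ is not covered by Lemma~\ref{recur} and would be verified by direct enumeration. Otherwise, apply Theorem~\ref{theorem1}(b) with $a=\ell$, $b=v$. Since $\ell$ is a leaf, one checks that $T^{\ell v}=T$, so
\[
q(T)=\beta(\ell)q(T-\ell)+\beta(v)q(T-v)+\bigl(\alpha(\ell)\alpha(v)(x-1)^2-\beta(\ell)\beta(v)\bigr)q(T-\ell-v).
\]
Each $\ell_i$ becomes isolated in $T-v$ and in $T-\ell-v$, so Theorem~\ref{theorem2}(c) factors these polynomials through $q(T_0)$ times products of $\alpha(\ell_i)(y-1)+\beta(\ell_i)$. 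After the $\beta(\ell)\beta(v)$ terms cancel, the identity collapses to
\[
q(T)=\beta(\ell)q(T-\ell)+C\cdot q(T_0),
\]
where $C=\alpha(\ell_k)\bigl((y-1)\beta(v)+(x-1)^2\alpha(v)\bigr)\prod_{i=1}^{k-1}\bigl(\alpha(\ell_i)(y-1)+\beta(\ell_i)\bigr)$.

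The remaining task is to show the es-cover sum decomposes identically. Lemma~\ref{recur} partitions the es-covers of $T$ into those with $\ell\notin I$, bijecting with es-covers of $T-\ell$, and those with $\ell\in I$, which are forced to contain every $\ell_i$ and to restrict on $V(T_0)$ to an arbitrary es-cover $I_0$. For $\ell\notin I$ a vertex-by-vertex check shows no $I_r/I_l/I_c'$ classification of a vertex other than $\ell$ changes upon removing $\ell$ (since $\ell_k\notin I$ and has no later siblings), giving $w_T(I)=\beta(\ell)\,w_{T-\ell}(I)$ and, by induction, a total of $\beta(\ell)q(T-\ell)$. For $\ell\in I$, each $\ell_j$ with $j<k$ lies in $I_r$ (witness $\ell_k$) and contributes $\beta(\ell_j)+\alpha(\ell_j)(y-1)$; $\ell_k\in I_l$ contributes $\alpha(\ell_k)((y-1)\beta(v)+(x-1)^2\alpha(v))$; $v\in I_c'$ contributes $1$; and the remaining weights agree with $w_{T_0}(I_0)$. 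Summing over $I_0$ and invoking induction yields $C\cdot q(T_0)$, matching the polynomial recursion term-for-term.

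The main obstacle is the vertex-by-vertex bookkeeping in the preceding paragraph. Choosing $\ell$ as the last child of $v$ with only-leaf siblings is what makes the modifications local---so that the sibling-order and neighborhood data used to compute $w_I(u)$ for $u\neq v,\ell_1,\ldots,\ell_k$ are untouched. The less trivial direction is the $\ell\in I$ branch, where one must simultaneously recognize that $v\notin I$ is forced and that $v$ has a child in $I$ (so $v\in I_c'$ and $w_I(v)=1$), that the $I_r$-weights of the earlier $\ell_j$ match the isolated-vertex factors $\alpha(\ell_j)(y-1)+\beta(\ell_j)$ appearing in the expansion of $q(T-v)$, and that the $I_l$-weight of $\ell_k$ supplies precisely the factor $(y-1)\beta(v)+(x-1)^2\alpha(v)$ left over after the cancellation.
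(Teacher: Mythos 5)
Your proof is correct and takes essentially the same route as the paper: the paper's own proof is a sketch that invokes induction via Lemma \ref{recur} together with Theorem \ref{theorem1} and Corollary \ref{cor1} (deferring details to \cite{ACRT}), and your argument is exactly that induction carried out explicitly, since your use of Theorem \ref{theorem1}(b) with $T^{\ell v}=T$ plus the multiplicativity of Theorem \ref{theorem2}(c) reproduces the effect of Corollary \ref{cor1} in this situation. Your deferred degenerate case (the depth-one star) corresponds to the paper's base case of trees with at most one vertex of degree $\geq 2$, which the paper likewise leaves as a direct verification.
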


\begin{proof}
If $T$ has no more than one vertex of degree $\geq2$, it is not difficult to
verify the theorem directly. If $T$ has more than one vertex of degree $\geq2$
then the theorem follows inductively from Theorem \ref{theorem1} and Corollary
\ref{cor1} using Lemma \ref{recur}, as detailed in \cite{ACRT}.
\end{proof}

Setting $\alpha\equiv1$ and $\beta\equiv1$ we deduce that the unweighted
interlace polynomial of a tree is determined by a simple formula involving
earlier sibling covers.

\begin{corollary}
\label{treeint}If $T$ is a tree then the unweighted interlace polynomial of
$T$ is
\[
\sum_{s,t}c_{s,t}(T)\cdot y^{s-t}(y-1+(x-1)^{2})^{t}\text{.}%
\]

\end{corollary}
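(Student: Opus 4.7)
The plan is to specialize Theorem \ref{treepoly} by setting $\alpha\equiv 1$ and $\beta\equiv 1$, simplify the $I$-weights, and then regroup the resulting sum by the statistics $(s,t)=(|I|,\#\{\text{distinct parents of non-root elements of }I\})$. Fix any ordering on $T$ so that Theorem \ref{treepoly} applies; the invariance of the resulting sum under reordering will be automatic from the left-hand side $q(T)$.

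First I would evaluate $w_I(v)$ case by case under the trivial weights. The four cases in the definition of the $I$-weight become
\[
w_I(v)=\begin{cases} 1+(y-1)=y & v\in I_r,\\ (y-1)+(x-1)^2 & v\in I_l,\\ 1 & v\in I_c',\\ 1 & v\notin I\cup I_c'.\end{cases}
\]
Consequently only vertices in $I_r\cup I_l$ contribute nontrivial factors, and
\[
w_T(I)=y^{|I_r|}\bigl(y-1+(x-1)^2\bigr)^{|I_l|}.
\]

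Next I would identify these two exponents with $s-t$ and $t$. The sets $I_r$ and $I_l$ partition $I$: a vertex $v\in I$ lies in $I_r$ if $v=r$ or $I$ contains a later sibling of $v$, and in $I_l$ otherwise, so $|I_r|+|I_l|=s$. To evaluate $|I_l|$, group the non-root elements of $I$ by their parent $p$; each such parent contributes at least one child to $I$, and among these children exactly one has no later sibling in $I$ (the one with the latest position in the ordering of $p$'s children). Thus every parent $p$ of non-root elements of $I$ contributes exactly one vertex to $I_l$, giving $|I_l|=t$ and hence $|I_r|=s-t$.

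Substituting these into the expression for $w_T(I)$ and collecting es-covers by $(s,t)$ yields
\[
q(T)=\sum_{I}w_T(I)=\sum_{s,t}c_{s,t}(T)\cdot y^{s-t}\bigl(y-1+(x-1)^2\bigr)^{t},
\]
which is the claim. The only nontrivial step is the bijective bookkeeping identifying $|I_l|$ with the number of distinct parents of non-root elements of $I$; everything else is a direct specialization of Theorem \ref{treepoly}. I do not expect any real obstacle, since the heavy lifting has already been done in the proof of Theorem \ref{treepoly} via Theorem \ref{theorem1}, Corollary \ref{cor1}, and Lemma \ref{recur}.
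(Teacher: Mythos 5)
Your proposal is correct and follows exactly the route the paper intends: the paper derives Corollary \ref{treeint} simply by setting $\alpha\equiv\beta\equiv1$ in Theorem \ref{treepoly}, and your case-by-case evaluation of $w_I(v)$ together with the bookkeeping $|I_l|=t$, $|I_r|=s-t$ just makes explicit the regrouping the paper leaves implicit. No gaps; the identification of $I_l$ with one vertex per parent (the latest child in $I$) is the right and complete justification.
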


As the interlace polynomials are multiplicative on disjoint unions, these
results extend directly to disconnected forests.

\section{Algorithmic activities}

The Tutte polynomial is a very useful invariant of graphs and matroids, which
incorporates a great deal of information and can be defined in several
different ways; see \cite{Bo}, \cite{T} and \cite{W} for more detailed
discussions than we provide here. One of its definitions is given in
Definitions \ref{oldact} and \ref{act}.

\begin{definition}
\label{oldact} Suppose $G$ is an unweighted graph with $E(G)=\{e_{1}%
,...,e_{m}\}$, and $T$ is a\ maximal spanning forest of $G$. An element
$e_{i}\not \in E(T)$ is \emph{externally active} with respect to $T$ if $i$ is
the least index of an element of the unique circuit contained in
$E(T)\cup\{e_{i}\}$. An element $e_{i}\in E(T)$ is \emph{internally active}
with respect to $T$ if $i$ is the least index of an element of the unique
cutset contained in $(E(G)-E(T))\cup\{e_{i}\}$. The numbers of edges that are
externally and internally active with respect to $T$ are denoted $e(T)$ and
$i(T)$, respectively.
\end{definition}

\begin{definition}
\label{act} If $G$ is an unweighted graph with $E(G)=\{e_{1},...,e_{m}\}$ then
the Tutte polynomial of $G$ is
\[
t(G)=\sum_{T}x^{i(T)}y^{e(T)}.
\]

\end{definition}

Theorem \ref{treepoly} and Corollary \ref{treeint} bear a strong resemblance
to this formula, with earlier sibling covers in rooted trees replacing maximal
spanning forests in arbitrary graphs. We do not know whether there is a
combinatorial analogue of Definition \ref{act} that describes the interlace
polynomials of an arbitrary graph, but there is an algorithmic analogue.
Before presenting it, we recall another definition of the Tutte polynomial.

\begin{definition}
\label{delcont} If $G$ is an unweighted graph then the Tutte polynomial $t(G)
$ is an element of the polynomial ring $\mathbb{Z}[x,y]$ determined
recursively by these properties.

(a) If $e\in E(G)$ is neither a loop nor an isthmus then $t(G)=t(G-e)+t(G/e)$.

(b) If $\lambda\in E(G)$ is a loop then $t(G)=yt(G-\lambda)$.

(c) If $\beta\in E(G)$ is an isthmus of $G$ then $t(G)=xt(G/\beta)$.

(d) For any positive integer $n$, $t(E_{n})=1$.
\end{definition}

To recursively calculate $t(G)$ one simply chooses an arbitrary edge of $G$,
and applies the appropriate part of Definition \ref{delcont}; this process is
repeated as many times as necessary. Such a computation is represented by a
computation tree in which a node that represents an instance of part (a) has
two children and a node that represents an instance of part (b) or (c) has
only one.

\begin{proposition}
\label{actact} Let $G$ be a graph with $E(G)=\{e_{1},...,e_{m}\}$, and
consider the recursive implementation of Definition \ref{delcont} in which
$e_{m}$ is removed first, then $e_{m-1}$ is removed in all branches, then
$e_{m-2}$ is removed in all branches, and so on. The leaves of the computation
tree representing this implementation correspond to the maximal spanning
forests of $G$, with the leaf corresponding to $T$ resulting from the portion
of the computation in which edges of $T$ are contracted and elements of
$E(G)-E(T)$ are deleted. A node of this portion of the computation tree
represents the removal of an active edge if and only if it has precisely one child.
\end{proposition}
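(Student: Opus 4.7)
My plan is to trace, at each node of the computation tree, the current graph and to translate the structural conditions of Definition~\ref{delcont} into conditions on the edge indices and the eventual maximal spanning forest. At a node whose associated edge is $e_i$, the path from the root has processed $e_m, e_{m-1}, \ldots, e_{i+1}$; writing $A$ for the contracted subset among these and $B$ for the deleted subset, the current graph is $G_i := G/A \setminus B$, whose edge set is $\{e_1, \ldots, e_i\}$. Parts~(b) and~(c) of Definition~\ref{delcont} apply exactly when $e_i$ is respectively a loop or an isthmus of $G_i$, in which case the node has one child; otherwise part~(a) applies and the node has two children.

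Next I would set up the bijection between leaves and maximal spanning forests. If a leaf is reached with contracted set $A$, then $A$ is acyclic: if a circuit $C \subseteq A$ existed, the last edge of $C$ to be contracted would have become a loop at the moment of its contraction, contradicting the hypothesis of part~(c). Because parts~(a) and~(b) only delete non-isthmus edges, the number of connected components is preserved at every step, while each contraction decreases the vertex count by one. Since the leaf graph is edgeless, its $n - |A|$ vertices equal $c(G)$, forcing $|A| = n - c(G)$ and making $A$ a maximal spanning forest. Conversely, given a maximal spanning forest $T$, the rule ``contract $e_i$ iff $e_i \in T$'' is always admissible: if $e_i \in T$, then the endpoints of $e_i$ are not connected by a $T$-path avoiding $e_i$ (as $T$ is a forest), so contracting only edges of $T \setminus \{e_i\}$ cannot identify them and $e_i$ is not a loop of $G_i$; if $e_i \notin T$, then $T_{\leq} := T \cap \{e_1, \ldots, e_i\}$ is a maximal spanning forest of $G_i$ not containing $e_i$, so adjoining $e_i$ closes a circuit in $G_i$ and $e_i$ is not an isthmus.

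Finally I would verify the activity criterion at a node processing $e_i$ on the branch leading to $T$. The key identification is that $T_{\leq} = T \cap \{e_1, \ldots, e_i\}$ is a maximal spanning forest of $G_i$ and that the unique circuit of $E(T) \cup \{e_i\}$ and the unique cutset of $(E(G) - E(T)) \cup \{e_i\}$ in $G$ restrict, when intersected with $\{e_1, \ldots, e_i\}$, to the corresponding circuit and cutset in $G_i$ with respect to $T_{\leq}$. From this: when $e_i \notin T$, $e_i$ is a loop of $G_i$ iff every edge of this circuit other than $e_i$ has index greater than $i$, i.e.\ iff $e_i$ is externally active; when $e_i \in T$, $e_i$ is an isthmus of $G_i$ iff every edge of this cutset other than $e_i$ has index greater than $i$, i.e.\ iff $e_i$ is internally active. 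Combined with the first paragraph, this yields the proposition. I expect the main technical step to be the restriction claim for the circuit and cutset---tracking which edges of the fundamental circuit and cutset survive the contractions and deletions carried out before $e_i$ is processed; the rest reduces to routine counts and unwinding of definitions.
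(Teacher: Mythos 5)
Your proposal is correct, but it takes a genuinely different route from the paper. The paper proves the proposition by induction on $|E(G)|$: it treats the three cases for $e_{m}$ (loop, isthmus, neither), notes how the maximal spanning forests of $G$ correspond to those of $G-e_{m}$ and/or $G/e_{m}$, and builds the computation tree for $t(G)$ by attaching a root node to the computation tree(s) for these smaller graphs; the activity claim rides along on the induction, since for the largest index $m$ ``externally active'' simply means ``loop'' and ``internally active'' simply means ``isthmus,'' and the paper otherwise leans on the fact that the statement is implicit in standard treatments (citing Tutte and Bollob\'{a}s). You instead analyze an arbitrary node of the tree directly: you identify the current graph as the explicit minor $G_{i}=G/A\setminus B$, obtain the leaf--forest bijection from acyclicity of the contracted set plus a component count, and reduce the activity criterion to the claim that the fundamental circuit and cutset of $e_{i}$ with respect to $T$ in $G$ restrict, on $\{e_{1},...,e_{i}\}$, to the fundamental circuit and cutset with respect to $T_{\leq}$ in $G_{i}$. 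Your version is more self-contained and isolates exactly where the mathematical content lies---in that restriction lemma, which you correctly flag as the main technical step; it is a standard fact (contracting elements of a basis and deleting elements outside it behaves as you describe), so your plan does go through, though a complete write-up would have to prove it. The paper's induction is shorter and never needs to state that lemma, but at the price of leaving the compatibility of activities with deletion and contraction tacit; your argument also yields something the paper's does not make explicit, namely a description of the graph sitting at every internal node of the computation tree.
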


\begin{proof}
The proposition is implicit in the fact that Definition \ref{delcont} and
Definition \ref{act} both yield $t(G)$, so it appears implicitly in just about
every presentation of the Tutte polynomial. See Theorem IX. 65 of \cite{T} or
Theorem X.10 of \cite{Bo}, for instance. Explicit discussions of the
connection between activities and computation are less common in the
literature, though there are some \cite{Ba, GM, GT}.

The proof is a direct induction on $\left|  E(G)\right|  $. If $e_{m}$ is a
loop then the maximal spanning forests of $G$ and $G-e_{m}$ coincide, and the
computation tree for $t(G)$ is obtained from the computation tree for
$t(G-e_{m})$ by attaching a new root node of degree 1, representing the
removal of $e_{m}$. If $e_{m}$ is an isthmus then the maximal spanning forests
of $G$ and $G/e_{m}$ correspond, and the computation tree for $t(G)$ is
obtained from the computation tree for $t(G/e_{m})$ by attaching a new root
node of degree 1. Otherwise, the maximal spanning forests of $G$ that contain
$e_{m}$ correspond to the maximal spanning forests of $G/e_{m}$, and the
maximal spanning forests of $G$ that do not contain $e_{m}$ are the maximal
spanning forests of $G-e_{m}$. The computation tree for $t(G)$ consists of the
root and two disjoint subtrees that are the computation trees for $t(G/e_{m})$
and $t(G-e_{m})$.
\end{proof}

\bigskip

Definition \ref{oldact} defines active edges using the structure of $G$, and
this leads to Definition \ref{act}'s description of $t(G)$ as a generating
function for maximal spanning forests. Proposition \ref{actact} shows that we
may also see activity from an algorithmic viewpoint: the external activity of
a particular $e\notin E(T)$ is revealed in the fact that one step of a
calculation of $t(G)$ involves removing $e$ using part (b) of Definition
\ref{delcont} rather than part (a). This distinction affects the result of the
computation, so it would be important even if activity could not be
conveniently described using the structure of $G$, or did not contribute to a
convenient closed form for $t(G)$.

\bigskip

Here is an analogue of Definition \ref{delcont} for the weighted interlace polynomial.

\begin{definition}
\label{alg}If $G$ is a weighted graph then $q(G)$ is determined recursively by
the following properties.

(a) If $a$ is a looped vertex then
\[
q(G)=\beta(a)q(G-a)+\alpha(a)(x-1)q(G^{a}-a).
\]

(b) If $a$ and $b$ are loopless neighbors in~$G$ then
\[
q(G)=\beta(a)q(G-a)+\alpha(a)q((G^{ab}-b)^{\prime}),
\]
where $(G^{ab}-b)^{\prime}$ is obtained from $G^{ab}-b$ by changing the
weights of $a$ to $\alpha^{\prime}(a)=\beta(b)$ and $\beta^{\prime}%
(a)=\alpha(b)(x-1)^{2}$.

(c) If $a$ is isolated and looped then $q(G)=(\alpha(a)(x-1)+\beta(a))q(G-a)$.

(d) If $a$ is isolated and unlooped then $q(G)=(\alpha(a)(y-1)+\beta
(a))q(G-a)$.

(e) The empty graph $\emptyset$ has $q(\emptyset)=1$.
\end{definition}

The preceding discussion of activities and the Tutte polynomial suggests the following.

\begin{definition}
\label{algact} A node of a computation tree representing a recursive
implementation of Definition \ref{alg} is \emph{active} if it has precisely
one child, i.e., if it represents an application of part (c) or part (d).
\end{definition}

An \textquotedblleft activities formula\textquotedblright\ for $q(G)$ arises
directly from a computation tree representing an implementation of Definition
\ref{alg}. The formula has one summand for each leaf of the computation tree
(each call to part (e) of Definition \ref{alg}), representing the product of
the coefficients contributed by the nodes in the portion of the computation
tree that gives rise to that leaf. Active and non-active nodes contribute
different coefficients.

\bigskip

If $G$ is a rooted tree then the formulas of Theorem \ref{treepoly} and its
corollaries are activities formulas.

\begin{proposition}
Let $T$ be a rooted tree with root $r$, and consider a recursive
implementation of Definition \ref{alg} structured as follows. If possible,
apply part (d) of Definition \ref{alg}; if not and there is a parent vertex
other than $r$ then apply part (b) with a leaf of the type denoted $\ell$ in
Lemma \ref{recur} as $a$; otherwise apply part (b) with the last child of $r$
as $a$. The leaves of the computation tree representing this implementation
correspond to the earlier sibling covers of $T$, with the es-cover $I$
corresponding to a given leaf constructed from the portion of the computation
that gives rise to that leaf as follows: an occurrence of part (d) of
Definition \ref{alg} contributes its $a$ to $I$, and an occurrence of the
$q((G^{ab}-b)^{\prime})$ branch of part (b) contributes its $a$ to
$I$\thinspace.
\end{proposition}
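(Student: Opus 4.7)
The plan is to induct on $|V(T)|$, paralleling the proof of Theorem \ref{treepoly}. The base case $|V(T)|=1$ is a direct verification: the only es-cover is $\{r\}$ with $r\in I_r$, giving $w_T(\{r\})=\alpha(r)(y-1)+\beta(r)$; and the algorithm, finding $r$ isolated and unlooped, applies part (d) of Definition \ref{alg} once to produce the single-leaf tree with weight $\alpha(r)(y-1)+\beta(r)$.

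For the inductive step I would case-split exactly as the algorithm does. In Case A, where $T$ has a parent vertex other than $r$, I choose the leaf $\ell$ guaranteed by Lemma \ref{recur} and apply part (b) of Definition \ref{alg} with $a=\ell$ and $b=p(\ell)$. The branch $\beta(\ell)q(T-\ell)$ recurses on a tree with one fewer vertex; by the inductive hypothesis its computation-tree leaves biject with the es-covers of $T-\ell$, which by Lemma \ref{recur} are precisely the es-covers of $T$ omitting $\ell$, and the prefactor $\beta(\ell)$ matches $w_I(\ell)$ for such $I$ since a leaf cannot lie in $I_c'$. In the branch $\alpha(\ell)q((T^{\ell p(\ell)}-p(\ell))')$, the pivot is trivial because $\ell$ has $p(\ell)$ as its only neighbor, and removing $p(\ell)$ isolates $\ell$ together with all of its (leaf) siblings from the rest of the tree. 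The algorithm then peels off these isolated vertices by successive applications of part (d) and finally recurses on $T-p(\ell)-p^{-1}(\{p(\ell)\})$; the inductive hypothesis there, combined with the second bijection in Lemma \ref{recur}, accounts for the es-covers of $T$ containing $\ell$.

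In Case B, where $T$ has no parent other than $r$, the tree is the star $K_{1,n}$ for some $n\geq 1$. Apply part (b) with $a$ the last child $c_n$ of $r$ and $b=r$; the pivot is again trivial. The branch $\beta(c_n)q(T-c_n)$ recurses on $K_{1,n-1}$ and, by induction, enumerates the es-covers of $T$ that omit $c_n$. The branch $\alpha(c_n)q((T^{c_n r}-r)')$ leaves $c_1,\ldots,c_n$ isolated (with $c_n$ reweighted to absorb $r$'s contribution), so repeated use of part (d) collapses this branch to a single leaf, which corresponds to the unique es-cover $\{c_1,\ldots,c_n\}$: once $c_n\in I$ forces $r\notin I$, the es-cover condition forces every earlier sibling of $c_n$ to be in $I$ as well.

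The main obstacle will be the weight bookkeeping in the include branches. In Case A, I need that the product of $\alpha(\ell)$ (from part (b)), the factor $\beta(p(\ell))(y-1)+\alpha(p(\ell))(x-1)^2$ that part (d) assigns to the reweighted $\ell$, the factors $\alpha(s)(y-1)+\beta(s)$ for each sibling $s$, and the recursive total weight from the pruned tree, exactly reproduces $\prod_v w_I(v)$. The matching uses $\ell\in I_l$ (since $\ell$ has no later siblings), each sibling $s\in I_r$ (since $\ell$ is a later sibling of $s$ in $I$), and $p(\ell)\in I_c'$ with $w_I(p(\ell))=1$. An analogous verification handles the star case, where the reweighted $c_n$ absorbs the $w_I$-contribution of $r\in I_c'$. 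These computations are routine but must be executed with care against the definition of $w_I$.
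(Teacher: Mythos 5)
Your proof is correct and follows essentially the same route as the paper's: induction on $|V(T)|$ with the same case split (single vertex, star rooted at $r$, and the general case via the leaf $\ell$ of Lemma \ref{recur}), using the two bijections of Lemma \ref{recur} to match the exclude/include branches of part (b) with the es-covers omitting/containing $\ell$. The only remark worth making is that the weight bookkeeping you flag as the main obstacle is not actually required: the proposition asserts only the correspondence between computation-tree leaves and es-covers, not that the accumulated coefficients reproduce $w_T(I)$ (that is the content of Theorem \ref{treepoly}).
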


\begin{proof}
If $T$ has no vertex other than $r$ then $\{r\}$ is the only es-cover in $T$,
and the computation consists simply of a single call to part (e) of Definition
\ref{alg}. If $T$ contains no parent vertex other than $r$ and $a$ is the last
child of $r$, then the es-covers in $T$ include $V(T)-\{r\}$ and the es-covers
of $T-a$. The first step of the computation is an application of part (b) of
Definition \ref{alg} with $b=r$. The inductive hypothesis applies to $T-a$,
and the branch of the computation corresponding to $(T^{ab}-b)^{\prime
}=(T-r)^{\prime}$ consists solely of calls to part (d) because every vertex of
$T-r$ is isolated; consequently the latter part of the computation tree
contains only one leaf, corresponding to $V(T)-\{r\}$. If $T$ contains a
parent vertex other than $r$, then the first step of the computation is an
application of part (b) of Definition \ref{alg} with $a=\ell$ as in Lemma
\ref{recur}, and $b=p(\ell)$. The computation tree contains a single node
representing this first step and also two subtrees, one corresponding to
$T-a=T-\ell$ and the other corresponding to $(T^{ab}-b)^{\prime}%
=(T-p(\ell))^{\prime}$. The proposition follows inductively from Lemma
\ref{recur}.
\end{proof}

\bigskip

We do not know whether or not it is possible to reformulate Definition
\ref{algact} so that it always refers to $G$ instead of a computation tree.
Such a\ reformulation would certainly be valuable, as the resulting activities
formulas would shed light on the combinatorial significance of the interlace polynomials.

\bigskip

Definition \ref{algact} extends directly to computation trees representing
implementations of other recursions. For instance, if Definition \ref{alg} is
augmented by incorporating pendant-twin reductions then the resulting
computation trees will have active nodes representing these reductions, in
addition to active nodes representing parts (c) and (d) of Definition
\ref{alg}.

\bigskip

\textbf{Acknowledgments}

\smallskip\bigskip

We are grateful to M. Bl\"{a}ser, B. Courcelle, J. A. Ellis-Monaghan, G.
Gordon, C. Hoffmann and an anonymous referee for advice and encouragement. We
also appreciate\ the support of Lafayette College.

\medskip

\end{document}